\newtheorem{lemma}{Lemma}
\newtheorem{theorem}{Theorem}
\newtheorem{conjecture}{Conjecture}
\newtheorem{definition}{Definition}
\tikzstyle{vertex} = [fill,shape=circle,node distance=80pt]
\tikzstyle{edge} = [fill,opacity=.5,fill opacity=.5,line cap=round, line join=round, line width=50pt]
\tikzstyle{elabel} =  [fill,shape=circle,node distance=30pt]
\title{The maximum $p$-Spectral Radius of Hypergraphs with $m$ Edges}
\author{
Linyuan Lu
\thanks{University of South Carolina, Columbia, SC 29208, 
({\tt lu@math.sc.edu}). This author was supported in part by NSF
grant DMS 1600811 and ONR grant N00014-17-1-2842.}
}
\begin{document}
\maketitle
\begin{abstract}
 For $r\geq 2$ and $p\geq 1$, the $p$-spectral radius of an $r$-uniform hypergraph $H=(V,E)$ on $n$ vertices 
  is defined to  be
  $$\rho_p(H)=\max_{{\bf x}\in \mathbb{R}^n: \|{\bf x}\|_p=1}r \cdot \!\!\!\!
  \sum_{\{i_1,i_2,\ldots, i_r\}\in E(H)}
    x_{i_1}x_{i_2}\cdots x_{i_r},$$
    where the maximum is taken over all ${\bf x\in \mathbb{R}^n}$ with the $p$-norm equals 1.
    In this paper, we proved for any integer $r\geq 2$, and any real $p\geq 1$,
    and any $r$-uniform hypergraph $H$ with $m={s\choose r}$ edges (for some real $s\geq r-1$), we have $$\lambda_p(H)\leq \frac{rm}{s^{r/p}}.$$
    The equality holds if and only if $s$ is an integer and $H$ is the complete $r$-uniform hypergraph $K^r_s$ with some possible isolated vertices added. Thus, we completely settled a conjecture of Nikiforov.
In particular, we settled all the principal cases of the Frankl-F\"{u}redi's Conjecture on the Lagrangians of $r$-uniform hypergraphs for all $r\geq 2$.
\end{abstract}

\textsl{MSC:} 05C50; 05C35; 05C65 

\textsl{keywords:} $p$-spectral radius, uniform hypergraph, adjacency tensor,
Lagrangian, Frankl-F\"uredi Conjecture\\

\section{History}
For $r\geq 2$, an $r$-uniform hypergraph $H$ on $n$ vertices consists of a vertex set $V$ and an edge set  $E\subseteq {V\choose r}$. Cooper and Dutle \cite{Cooper} defined
the adjacency tensor $A$ of  $H$ to be the  $r$-order $n$-dimensional tensor
$A=(a_{i_1 \cdots i_r})$ by
$$a_{i_1 \cdots i_r}=
 \begin{cases}
\frac{1}{(r-1)!}  & \text{if $\{i_1, \ldots,  i_r\}$ is an edge of $H$ ,} \\
0  & \text{otherwise,}
\end{cases}
$$
where each $i_j$ runs from $1$ to $n$ for $j\in[r]$. 
The adjacency tensor $A$ of $r$-uniform hypergraph is always  nonnegative and symmetric.
  
Given an $r$-uniform hypergraph $H$ (on $n$ vertices), the polynomial form $P_H(\mathbf{x})\colon \mathbb{R}^n\to \mathbb{R}$  is defined for any vector  $\mathbf{x}=\left( x_1, \ldots, x_n\right) \in \mathbb{R}^n$ as 
$$ P_H(\mathbf{x})= \sum\limits_{i_1,  \ldots,  i_r=1}^n a_{i_1 \cdots i_r} x_{i_1} \cdots x_{i_r}= r\sum\limits_{\{i_1,  \ldots,  i_r\}\in E(H)}x_{i_1} \cdots x_{i_r}.$$
For $p\geq 1$, the {\em $p$-norm} of a vector $\bm{x}\in\mathbb{R}^n$ is
$$\Vert\mathbf{x}\Vert_p=\left(\sum\limits_{i=1}^{n} |x_i|^p\right)^{1/p}.$$
Let $S_p^+$ be the set of all
nonnegative vectors $\bm{x}$ such that $\Vert\mathbf{x}\Vert_p=1$.
For $p\geq 1$, the $p$-spectral radius of an $r$-uniform hypergraph $H$ is
$$\rho_p(H)=\max_{\bm{x}\in S_p^+} P_H(\mathbf{x})= \max_{\bm{x}\in S_p^+}r\sum\limits_{\{i_1,  \ldots,  i_r\}\in E(H)} x_{i_1}\cdots x_{i_r}.
$$
Since $S_p^+$ is compact, the maximum can be always reached by some ${\bf x}\in S_p^+$,
which is called a {\em Perron} vector for $\rho_p(H)$. The Perron vector satisfies the following
eigen-equations:
\begin{equation}
  \label{eq:eigen}
  \sum_{\{i_2,\ldots, i_r\}\in E(H_i)}x_{i_2}\cdots x_{i_r}=\rho_p(H) x_i^{p-1} \mbox{ for any } x_i\ne 0. 
\end{equation}
Here $H_i$, {\em the link hypergraph of $H$ at $i$,}
consists of all $(r-1)$-tuple $f$ such that $f\cup\{i\}\in E(H)$.

It is known that the Perron vector is always positive and unique when $p>r$, or when $p=r$ and $H$ is connected. 
In general, a Perron vector could be neither unique nor positive. Since we didn't use the positivity and
uniqueness of the Perron vector in this paper, we will omit the detail of Perron-Frobenius Theorems
for hypergraphs. Readers are encouraged to read \cite{ChangPearsonZhang, FriedlandGaubetHan, YangYang}.

The $p$-spectral radius was introduced by Keevash-Lenz-Mubayi \cite{KLM2014} and followed by 
Nikiforov \cite{Nikiforov2014} in 2014. The $\rho_p(H)$ encompasses three important parameters of
$H$:
at $p=1$, $\frac{1}{r}\rho_1(H)$ is the {\em Lagrangian} of $H$ (Nikiforov referred it
as the MS-index of $H$ in honor of Motzkin and Straus);
at $p=r$, $\rho_r(H)$ is just the spectral radius $\rho(H)$; and at $p=\infty$,
$\frac{1}{r}\lim_{p\to\infty}\rho_p(H)$ is the number of edges in $H$.

The problem of determining the maximum of $\rho_p(H)$ among all $r$-uniform hypergraphs $H$
with a fixed number of edges
has a long history. For $p=r=2$,  Brualdi and Hoffman \cite{Brualdi} proved that the maximum of $\rho(H)$ among all graphs with ${k\choose 2}$ edges
is reached by the union of a complete graph on $k$ vertices and some possible isolated vertices. They conjectured that the maximum spectral radius of a graph $H$ with $m={s\choose 2} + t$ edges is attained by the graph
 $H_m$, which is obtained from complete graph $K_s$ by adding a new vertex and $t$ new edges.
 In 1987, Stanley \cite{stanley} proved that
 the spectral radius of a graph $H$ with $m$ edges
 is at most $\frac{\sqrt{1+8m}-1}{2}$. The equality holds if and only if $m={s\choose 2}$
 and $H$ is the union of the complete graph $K_s$ and some isolated vertices.
 Friedland \cite{Friedland} proved a bound which is tight on 
 the complete graph with one, two, or three edges removed or the complete graph with one edge added. 
 Rowlinson \cite{Row} finally confirmed Brualdi and Hoffman's conjecture, and proved that 
 $H_m$ attains the maximum spectral radius among all graphs with $m$ edges.

 For $r=2$ and $p=1$, Motzkin and Straus \cite{MS1965} proved that $\rho_1(H)=1-\frac{1}{\omega(H)}$,
 where $\omega(H)$ is the clique number of a graph $H$. For $r$-uniform hypergraph $H$, the value
 $\mu_r(H):=\frac{1}{r}\rho_1(H)$ is often referred as the {\em Lagrangian} of $H$,
 (or the MS index in \cite{Nikiforov2018}).  In 1989 Frankl and F\"{u}redi \cite{FF89}
 conjectured that the maximum Lagrangian of an $r$-graph $H$ with $m$ edges is realized
 by an $r$-uniform hypergraph consisting of the first $m$ sets in ${\mathbb{N} \choose r}$ in the
 colexicographic order (that is $A<B$ if $\max(A\Delta B)\in B$.) By Motzkin and Straus' result,
 the Frankl-F\"{u}redi conjecture holds for graphs. For $r=3$, Talbot \cite{Talbot02}
 and Tang-Peng-Zhang-Zhao \cite{TPZZ} confirmed the Frankl-F\"{u}redi conjecture for almost all values  of $m$. Tyomkyn \cite{Tyomkyn} proved the Frankl-F\"{u}redi conjecture holds almost everywhere
 for each $r\geq 4$.  Let
 \begin{equation}
   \label{eq:mum}
\mu_r(m)=\max\{ \mu(H)\colon H \mbox{ is an $r$-graph with $m$ edges}\}.   
 \end{equation}
 Note that the Frankl-F\"{u}redi conjecture does not provide an
 easy-to-use, closed-form expression for $\mu_r(m)$. Nikiforov \cite{Nikiforov2018}
 made the following conjecture:

 \begin{conjecture} \cite{Nikiforov2018}\label{c1}
   Let $r \geq 3$ and $H$ be an $r$-uniform hypergraph with $m$ edges. If $m = {s\choose r}$
   for some real $s$, then $\mu(H)\leq m s^{-r}.$
 \end{conjecture}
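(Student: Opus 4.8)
The plan is to prove the (formally stronger) statement that $\rho_p(H)\le f_r(m)$ for every $m$-edge $r$-graph $H$, where $f_r(m)=rm/s^{r/p}$ with $\binom{s}{r}=m$; equivalently, that $K^r_s$ maximises $\rho_p$ among all $m$-edge $r$-graphs when $m=\binom{s}{r}$. One checks first that $f_r$ is increasing in $m$ (from $\frac{d}{ds}\binom{s}{r}=\binom{s}{r}\sum_{j=0}^{r-1}\frac1{s-j}\ge\frac{r}{s}\binom{s}{r}$) and that $f_r(\binom{s}{r})=\rho_p(K^r_s)$ via the uniform Perron vector. We may assume the Perron vector $\mathbf x$ of $H$ is positive on all non-isolated vertices -- otherwise delete a vertex with zero coordinate, which removes only zero-weight edges, preserves $\rho_p(H)$, decreases $m$, and we finish by induction on $m$. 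After discarding isolated vertices $n\le rm$, so a maximiser $H^\ast$ of $\rho_p$ over $m$-edge $r$-graphs exists.

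The engine is a \emph{vertex-reduction lemma}: if $H$ has more than $s$ non-isolated vertices, there is an $m$-edge $r$-graph $H'$ with fewer non-isolated vertices and $\rho_p(H')>\rho_p(H)$. To build $H'$: order the vertices with $x_1\ge\cdots\ge x_n>0$, let $v=n$ be the lightest one, $d=\deg(v)$; delete $v$ together with its $d$ incident edges, then add $d$ fresh $r$-subsets of $\{1,\dots,n-1\}$ (there is room, since $n-1\ge s$ gives $\binom{n-1}{r}\ge m$) chosen so that their total $x$-weight (writing $x^e:=\prod_{i\in e}x_i$) is at least $\sum_{e\ni v}x^e$. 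Such edges exist by a Hall-type matching: send each deleted edge $\{v\}\cup f$ injectively to a so-far-unused non-edge $f\cup\{j\}\subseteq\{1,\dots,n-1\}$, where $x_j\ge x_v$ (as $v$ is lightest) forces $x^{f\cup\{j\}}\ge x^{\{v\}\cup f}$; Hall's condition is verified once $n\ge 2r$, the finitely many cases $n<2r$ being direct. Since the eigen-equation gives $\sum_{e\ni v}x^e=x_v\sum_{f\in H_v}x^f=\rho_p(H)\,x_v^p$, the restriction of $\mathbf x$ to $\{1,\dots,n-1\}$ yields
\[
\rho_p(H')\ \ge\ \frac{P_{H'}(\mathbf x)}{(1-x_v^p)^{r/p}}\ =\ \frac{\rho_p(H)(1-rx_v^p)+r\sum_{e\in E_{\mathrm{new}}}x^e}{(1-x_v^p)^{r/p}}\ \ge\ \frac{\rho_p(H)}{(1-x_v^p)^{r/p}}\ >\ \rho_p(H),
\]
since $0<x_v^p<1$. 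Hence $H^\ast$ has at most $s$ non-isolated vertices.

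If $s$ is an integer -- which is exactly \autoref{c1} for $m=\binom{s}{r}$, hence all principal cases of Frankl--F\"uredi -- this finishes the proof: $H^\ast$ has $\binom{s}{r}$ edges and exactly $s$ non-isolated vertices (at least $s$ because $\binom{s-1}{r}<m$, at most $s$ by the lemma), so $H^\ast$ is $K^r_s$ with isolated vertices added and $\rho_p(H)\le\rho_p(K^r_s)=rm/s^{r/p}$; applying the lemma to $H$ itself shows that equality forces $H$ to have $s$ non-isolated vertices, i.e.\ $H=K^r_s$ plus isolated vertices. For general real $s$ the reduction only descends to $t:=\lceil s\rceil$ non-isolated vertices, with $\binom{t-1}{r}<m<\binom{t}{r}$; a standard compression argument (compressions do not decrease $\rho_p$) then lets us assume $H^\ast$ is the colex-initial segment on $\{1,\dots,t\}$, i.e.\ $K^r_{t-1}$ together with $\{\{t\}\cup f:f\in L\}$ for an $(r-1)$-uniform colex-initial segment $L$ of size $m-\binom{t-1}{r}$. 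Solving the eigen-equations of this two-group structure and bounding $\rho_p(L)$ by the $(r-1)$-uniform case of the theorem (induction on $r$; the base case $r=1$ is trivial, as an $r=1$ graph with $m$ edges has $\rho_p=m^{1-1/p}=f_1(m)$) reduces the required bound to a single-variable inequality relating $\binom{s}{r}$ and $s^{r/p}$.

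I expect this last reduction to be the main obstacle: every inequality above is an equality at $K^r_s$, so no slack may be wasted. In particular the handy estimates $\binom{t}{r-1}\le t^{r-1}/(r-1)!$ and $\sum_i w_i(1-w_i)^{r-1}\le 1$ (for $\sum_i w_i=1$) are asymptotically sharp yet already overshoot $f_r(m)$, so the $K^r_{t-1}$-plus-partial-link estimate must be carried through exactly -- a Lagrange-multiplier/rearrangement analysis showing the symmetric critical point, which is precisely $K^r_s$, is the global maximum. A secondary technical point is the $n<2r$ small-case analysis in the matching lemma.
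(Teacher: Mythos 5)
Your argument hinges entirely on the vertex-reduction lemma, and its proof sketch does not hold up. You restrict the replacement for a deleted edge $\{v\}\cup f$ to sets of the form $f\cup\{j\}$ (to force $x^{f\cup\{j\}}\ge x^{\{v\}\cup f}$ from $x_j\ge x_v$), but the candidate set for a given $f$ can be empty no matter how large $n$ is: take $H$ to be the ``book'' consisting of all $n-r+1$ edges containing a fixed $(r-1)$-set $f$. Here $m=n-r+1$, so the number of non-isolated vertices $n$ vastly exceeds $s$, the minimum Perron coordinate is attained at a page vertex $v$ whose unique link is $f$, and every set $f\cup\{j\}$ with $j\le n-1$ is already an edge; Hall's condition fails already for a singleton, so the claim that it is ``verified once $n\ge 2r$'' is false as stated. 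Replacing by edges not containing $f$ destroys the pointwise weight comparison, which is the only thing driving $\rho_p(H')\ge\rho_p(H)$; and restricting attention to maximizers does not help, since nothing in your construction exploits maximality beyond the eigen-equation, and proving that a maximizer's lightest vertex has unsaturated links is essentially the statement you are trying to establish. Bounding the support of an optimal weighting by $s$ is the heart of the problem, so this is a missing idea rather than a fixable detail.

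Even granting that lemma, you would only obtain the principal (integer $s$) case, whereas \autoref{c1} is asserted for every $m=\binom{s}{r}$ with real $s$, and your route for non-integer $s$ has two further gaps: left-compressions yield a left-compressed family, not a colexicographic initial segment (this distinction is precisely why the Frankl--F\"uredi problem is hard and why Talbot's and Tyomkyn's results are partial), and the concluding ``single-variable inequality'' is acknowledged but never proved. The paper's proof is structured quite differently and avoids support-size control altogether: for a $1$-maximum hypergraph it moves edges so that $\partial(H-v)\subseteq H_v$ for a vertex $v$ of maximum weight, applies Lov\'asz's shadow theorem to get $d_v\ge d_0=rm/s$, bounds $\rho_1(H)$ by two expressions coming from the link $H_v$ and from $H-v$ via double induction on $r$ and $m$, and then shows (\autoref{key}) that the resulting bound $h(d_v)$ is decreasing in $d_v$, so the worst case $d_v=d_0$ gives exactly $rm/s^r$ for every real $s\ge r-1$; general $p$ follows by the power-mean inequality. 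You would need an ingredient of comparable strength to close either of the gaps above.
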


 The value of $\mu_r(m)$ conjectured by Frankl and F\"{u}redi is quite close to $ms^{-r}$,
 and moreover, both values coincide if $s$ is an integer. Tyomkyn \cite{Tyomkyn} called
 the case of integer $s$ the principal case of the Frankl-F\"{u}redi’s conjecture,
 and solved it for any $r \geq 4$ and $m$ sufficiently large. Talbot \cite{Talbot02}
 had solved the principal case for $r = 3$ and any $m$. Here we completely solved the principal case for all $r\geq 2$ and any $m$.

 The case $r=2$ of \autoref{c1} 
 is followed by  Motzkin and Straus' result. Nikiforov \cite{Nikiforov2018} proved
 this conjecture for $r=3,4,5$; and for the case $s\geq 4(r-1)(r-2)$.
 In this paper, we completely settled this conjecture
 for all $r\geq 2$ and $m\geq 0$. 
 
 \begin{theorem} \label{t1}
   Let $r \geq 2$ and $H$ be an $r$-uniform hypergraph with $m$ edges. Write $m = {s\choose r}$
   for some real $s\geq r-1$. We have $$\mu(H)\leq m s^{-r}.$$
   The equality holds if and only if $s$ is an integer and $H$ is the complete  $r$-uniform hypergraph
   $K^r_s$  possibly with some isolated vertices added.
 \end{theorem}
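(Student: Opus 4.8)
The plan is to prove the equivalent statement $\rho_1(H)\le rms^{-r}$ (keeping the same equality characterization) by induction on $r$. For $r=2$ this is the Motzkin--Straus theorem: if $\omega=\omega(H)$ is the clique number then $\binom\omega2\le m=\binom s2$ forces $\omega\le s$, so $\mu(H)=\tfrac12(1-\tfrac1\omega)\le\tfrac12(1-\tfrac1s)=ms^{-2}$, with equality forcing $\omega=s\in\mathbb Z$ and hence, by comparing edge counts, $H=K_s$ plus isolated vertices. Assume now $r\ge3$, and (the case $m=0$ being trivial) that $m\ge1$. Fix a Perron vector $\mathbf x$ for $\rho_1(H)$ with $\|\mathbf x\|_1=1$; replacing $H$ by its restriction to the support of $\mathbf x$ leaves $\rho_1(H)$ and this (now fully supported) Perron vector unchanged, does not increase $m$, and --- since $rms^{-r}=r\,g_r(s)$ where $g_k(t):=\binom tk t^{-k}$ is increasing in $t$ --- does not increase the target bound, so we may assume $x_i>0$ for all $i$. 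Set $\lambda=\rho_1(H)$; for each vertex $i$ let $m_i=|E(H_i)|\ge1$ and let $t_i\ge r-1$ solve $\binom{t_i}{r-1}=m_i$.

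Two facts drive the inductive step: $\sum_i m_i=rm$, and the eigen-equation~\eqref{eq:eigen} at $p=1$, which gives $P_{H_i}(\mathbf x)=(r-1)\lambda$ for every $i$. Applying the theorem in dimension $r-1$ to the link $H_i$ (an $(r-1)$-uniform hypergraph with $m_i=\binom{t_i}{r-1}$ edges) gives $\mu(H_i)\le m_i t_i^{-(r-1)}$, i.e.\ $\rho_1(H_i)\le(r-1)\,g_{r-1}(t_i)$; combining this with the homogeneity bound $P_{H_i}(\mathbf x)\le\rho_1(H_i)(1-x_i)^{r-1}$ (the restriction of $\mathbf x$ to $V\setminus\{i\}$ has $1$-norm $1-x_i$) and with $P_{H_i}(\mathbf x)=(r-1)\lambda$ yields, for each $i$,
\[
\lambda\ \le\ g_{r-1}(t_i)\,(1-x_i)^{r-1}.
\]
Since $\tfrac{d}{dt}\log g_{r-1}(t)=\sum_{j=1}^{r-2}\tfrac{j}{t(t-j)}>0$, the map $g_{r-1}$ is strictly increasing on $(r-2,\infty)$, so this inverts to $t_i\ge g_{r-1}^{-1}\!\big(\lambda(1-x_i)^{-(r-1)}\big)$; using the identity $\binom{g_{r-1}^{-1}(v)}{r-1}=v\,\big(g_{r-1}^{-1}(v)\big)^{r-1}$ and summing over $i$,
\[
rm=\sum_i m_i=\sum_i\binom{t_i}{r-1}\ \ge\ \lambda\sum_i\rho(x_i)^{r-1},\qquad \rho(x):=\frac{g_{r-1}^{-1}\!\big(\lambda(1-x)^{-(r-1)}\big)}{1-x}.
\]
Thus the theorem reduces to the analytic inequality $\sum_i\rho(x_i)^{r-1}\ge s^r$ for all $\mathbf x\ge0$ with $\|\mathbf x\|_1=1$ (call it $(\star)$): granting $(\star)$ we obtain $rm\ge\lambda s^r$, i.e.\ $\lambda\le rms^{-r}$. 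As $\rho$ is increasing in $\lambda$ and the bound is trivial when $\lambda<rms^{-r}$, we may take $\lambda=rms^{-r}$ in $(\star)$; one then computes $g_{r-1}^{-1}\!\big(\lambda(1-\tfrac1s)^{-(r-1)}\big)=s-1$, so $\rho(\tfrac1s)=s$ and $(\star)$ is an equality on exactly $s$ vertices.

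The main obstacle is $(\star)$ itself, the difficulty being that $g_{r-1}^{-1}$ has no closed form. The route I would take is to prove, for $\lambda=rms^{-r}$, that $x\mapsto\rho(x)^{r-1}$ is convex on the interval where it is defined and that $n\mapsto n\,\rho(\tfrac1n)^{r-1}$ is nondecreasing for real $n\ge s$. Given these, Jensen's inequality yields $\sum_i\rho(x_i)^{r-1}\ge n\,\rho(\tfrac1n)^{r-1}\ge s^r$, where $n=|V|\ge s$ because $\binom nr\ge m$. Both properties are inequalities about $g_{r-1}$ and its inverse, which I would extract by differentiating $g_{r-1}\!\big(g_{r-1}^{-1}(v)\big)=v$ and reducing to elementary estimates on the factors $t,t-1,\dots,t-r+2$ of $\binom t{r-1}$; carrying this out uniformly in $r$ should be the bulk of the argument. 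The equality characterization then follows by reversing the chain: $\mu(H)=ms^{-r}$ forces equality throughout, hence --- the convexity and the monotonicity in $n$ being strict away from the extremal configuration --- $\mathbf x$ must be uniform and $n=s$, so $s\in\mathbb Z$; then each $t_i=s-1$ and $m_i=\binom{s-1}{r-1}$, and the equality case of the inductive hypothesis forces every link $H_i$ to be the complete $(r-1)$-graph on $V\setminus\{i\}$, whence $H$ restricted to $V$ is $K^r_s$ and $H$ is $K^r_s$ together with isolated vertices. When $s$ is not an integer, $n\ge\lceil s\rceil>s$ makes the monotonicity step strict, giving $\mu(H)<ms^{-r}$.
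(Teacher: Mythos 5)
Your reduction is correct as far as it goes, and it is genuinely different from the paper's argument: instead of shifting edges onto a single vertex of large degree (via Lov\'asz's shadow bound) and doing a double induction on $r$ and $m$, you use the $p=1$ eigen-equation \eqref{eq:eigen} at \emph{every} supported vertex, apply the inductive hypothesis to every link $H_i$, and sum degrees; writing $\tau(x)=g_{r-1}^{-1}\bigl(\lambda(1-x)^{-(r-1)}\bigr)$, your chain correctly yields $rm=\sum_i m_i\geq \frac{1}{\lambda}\cdot\lambda\sum_i\binom{\tau(x_i)}{r-1}=\lambda\sum_i\rho(x_i)^{r-1}$, so the theorem would indeed follow from the inequality $(\star)$ at $\lambda=rms^{-r}$, and your computation $\rho(1/s)=s$ checks out.

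The genuine gap is that $(\star)$ itself is never proved, and $(\star)$ is where the entire difficulty of the theorem lives. You propose to get it from two claims --- convexity of $x\mapsto\rho(x)^{r-1}$ on its domain and monotonicity of $n\mapsto n\,\rho(1/n)^{r-1}$ for real $n\geq s$ --- but you only say you ``would extract'' these by differentiating $g_{r-1}\bigl(g_{r-1}^{-1}(v)\bigr)=v$ and that carrying this out uniformly in $r$ ``should be the bulk of the argument.'' That bulk is missing: these are nontrivial second-derivative and monotonicity estimates for the inverse of $g_{r-1}$, exactly parallel to (and not obviously easier than) the paper's Lemmas 1--4 on $p_r^{-1}$, which occupy the technical heart of the paper; the identity $\rho(x)^{r-1}=\frac{1}{\lambda}\binom{\tau(x)}{r-1}$ makes the claims plausible (and they can be verified by hand for $r=3$, where $g_2^{-1}(v)=\frac{1}{1-2v}$ is explicit), but no argument is given for general $r$. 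Moreover, your equality characterization needs the \emph{strict} versions of both claims (strict convexity off the uniform vector, strict monotonicity in $n$), which are likewise unproven. As written, the proposal is a correct reduction to an unproven analytic inequality, not a proof of Theorem~\ref{t1}.
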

 
 Using the power-mean inequality, Nikiforov \cite{Nikiforov2014} show that
 $\left(\frac{\rho_p(H)}{rm}\right)^p$ is a non-increasing function of $p$.  
 Thus \autoref{t1} immediately implies the following theorem for general $p$.

\begin{theorem}\label{t2}
  For any integer $r\geq 2$ and any real $p\geq 1$, suppose that $H$ is an $r$-uniform hypergraph with $m={s\choose r}$ (for some real $s\geq r-1$) edges.  Then its $p$-spectral radius $\rho_p(H)$ satisfies
  \begin{equation}
    \label{eq:rhop}
\rho_p(H)\leq \frac{rm}{s^{r/p}}.    
  \end{equation}
The equality holds if and only if 
$s$ is an integer and $H$ is the complete $r$-uniform
hypergraph $K_s^r$ possibly with some isolated vertices added.
\end{theorem}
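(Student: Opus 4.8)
The plan is to obtain \autoref{t2} as an immediate consequence of \autoref{t1} together with Nikiforov's observation (recalled just above the statement) that, for a fixed $r$-uniform hypergraph $H$ with $m$ edges, the function $g(p):=\left(\frac{\rho_p(H)}{rm}\right)^p$ is non-increasing on $[1,\infty)$. If $m=0$ the inequality \eqref{eq:rhop} reads $0\le 0$ and is trivial (with equality), so assume $m\ge 1$. Evaluating at $p=1$ and using $\mu(H)=\frac1r\rho_1(H)$ gives $g(1)=\frac{\rho_1(H)}{rm}=\frac{\mu(H)}{m}$, and \autoref{t1} (whose hypothesis $m={s\choose r}$ with $s\ge r-1$ is exactly the hypothesis of \autoref{t2}) yields $g(1)\le s^{-r}$. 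Since $p\ge 1$ and $g$ is non-increasing, $g(p)\le g(1)\le s^{-r}$, that is $\left(\frac{\rho_p(H)}{rm}\right)^p\le s^{-r}$; taking the $p$-th root of both (nonnegative) sides gives $\frac{\rho_p(H)}{rm}\le s^{-r/p}$, which is \eqref{eq:rhop}.

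For the equality statement, first suppose $s$ is a positive integer and $H=K_s^r$ with some isolated vertices adjoined. Testing the definition of $\rho_p$ on the vector $\mathbf{x}$ that equals $s^{-1/p}$ on the $s$ vertices of $K_s^r$ and $0$ elsewhere (so $\|\mathbf{x}\|_p=1$) gives $P_H(\mathbf{x})=r{s\choose r}(s^{-1/p})^r=rm\,s^{-r/p}$, hence $\rho_p(H)\ge rm\,s^{-r/p}$; combined with the upper bound above this is equality. Conversely, if $\rho_p(H)=rm\,s^{-r/p}$ then $g(p)=s^{-r}$, and the chain $s^{-r}=g(p)\le g(1)\le s^{-r}$ (using monotonicity of $g$ and \autoref{t1}) forces $g(1)=s^{-r}$, i.e.\ $\mu(H)=ms^{-r}$; by the equality case of \autoref{t1}, $s$ must then be an integer and $H$ must be $K_s^r$ with possibly some isolated vertices.

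There is no genuine obstacle in this argument once \autoref{t1} is available: that theorem carries the entire weight, and for $p=1$ the statement of \autoref{t2} \emph{is} \autoref{t1}. The only point needing a moment's attention is the propagation of the equality case from $p=1$ to $p>1$ in both directions, and this is handled cleanly by the squeeze $s^{-r}=g(p)\le g(1)\le s^{-r}$, which relies on $g$ being non-increasing together with the value $g(1)=\mu(H)/m$ supplied by \autoref{t1}.
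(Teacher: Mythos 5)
Your proof is correct and takes essentially the same route as the paper: the monotonicity of $g(p)=\left(\rho_p(H)/(rm)\right)^p$ that you invoke is, in the case needed, exactly the paper's \autoref{l4} ($\rho_p(H)\le \rho_1(H)^{1/p}(rm)^{1-1/p}$), and the bound then follows from \autoref{t1} just as in the paper. The only difference is that you also spell out the equality case explicitly (the squeeze $s^{-r}=g(p)\le g(1)\le s^{-r}$ together with the test vector on $K_s^r$), a step the paper's written proof of \autoref{t2} leaves implicit.
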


For $r=p=2$, it implies that $\rho_2(H)\leq \frac{\sqrt{8m+1}-1}{2}$, which is exactly
the Stanley's theorem.  Bai and Lu \cite{maxspectra} proved
the diagonal case  $p=r$ for all $r\geq 2$.
Nikiforov showed that the theorem holds for $r=2,3,4,5$ and any $p\geq 1$, and
when $m\geq {4(r-1)(r-2)\choose r}$.

Our method is very different from Nikiforov's approach --- the inequalities on symmetric functions.
We use double induction on both $r$ and $m$. The analytic method that we used here, is similar to the one used
by Bai and Lu \cite{maxspectra}.
The paper is organized as follows: In section 2, we prove several key lemmas on some special functions.
We prove \autoref{t1} and \autoref{t2} in the last section.

\section{Lemmas on important  functions}
For a fixed positive integer $r$, consider the polynomial $p_r(x)=\frac{x(x-1)\cdots (x-r+1)}{r!}.$
Since the binomial coefficient ${n\choose r}=p_r(n)$, we view ${x\choose r}$
as the polynomial $p_r(x)$.
Note that $p_r(x)$ is an increasing function over the interval $[r-1,\infty)$
so that the inverse function exists. Let $p^{-1}_r\colon [0,\infty) \to
[r-1, \infty)$ denote the inverse function of
$p_r(x)$ (when restricted to the interval $[r-1, \infty)$.

\begin{lemma}\label{l1}
  Let $u=p_r^{-1}(x)$ and $g_r(x)=x u^{-r}$.
We have
\begin{align}
  g_r'(x) &= u^{-r}(1- \frac{r}{u\sum_{i=0}^{r-1} \frac{1}{u-i}})
            =\frac{\sum_{i=0}^{r-1} \frac{i}{u-i}}{u^{r+1}\sum_{i=0}^{r-1} \frac{1}{u-i}};\\
  g_r''(x) &\leq -\frac{r+1}{u x \sum_{j=0}^{r-1} \frac{1}{u-j}}|g_r'(x)|.
  \end{align}
\end{lemma}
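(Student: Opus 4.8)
The plan is to obtain both formulas by a direct chain-rule computation and then to reduce the second-order inequality to an elementary Chebyshev-type (correlation) inequality. Throughout I work on $x>0$, so that $u=p_r^{-1}(x)>r-1$; the case $r=1$ is trivial since there $g_r\equiv 1$, so assume $r\geq 2$. The starting point is that $x=p_r(u)=\frac{1}{r!}\prod_{i=0}^{r-1}(u-i)$, so logarithmic differentiation gives $p_r'(u)=p_r(u)\sum_{i=0}^{r-1}\frac{1}{u-i}=xS$, where I abbreviate $S=\sum_{i=0}^{r-1}\frac{1}{u-i}$ and $T=\sum_{i=0}^{r-1}\frac{i}{u-i}$. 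Hence $\frac{du}{dx}=\frac{1}{xS}$.

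For the first identity I differentiate $g_r(x)=xu^{-r}$ and substitute $\frac{du}{dx}=\frac{1}{xS}$:
$$g_r'(x)=u^{-r}-\frac{rx}{u^{r+1}}\cdot\frac{du}{dx}=u^{-r}-\frac{rx}{u^{r+1}}\cdot\frac{1}{xS}=u^{-r}\left(1-\frac{r}{uS}\right),$$
which is the first form. For the second form I use the identity $uS=\sum_{i=0}^{r-1}\frac{u}{u-i}=r+T$, so that $1-\frac{r}{uS}=\frac{T}{uS}$ and therefore $g_r'(x)=\frac{T}{u^{r+1}S}$, as claimed. In particular $g_r'(x)>0$, since $T>0$ and $S>0$ whenever $u>r-1$ and $r\geq 2$.

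For the second derivative I differentiate $\log g_r'(x)=\log T-(r+1)\log u-\log S$ in $x$, using $\frac{dT}{du}=-\sum_{i=0}^{r-1}\frac{i}{(u-i)^2}$, $\frac{dS}{du}=-\sum_{i=0}^{r-1}\frac{1}{(u-i)^2}$ and $\frac{du}{dx}=\frac{1}{xS}$, which yields
$$\frac{g_r''(x)}{g_r'(x)}=\frac{1}{xS}\left(\frac{1}{T}\frac{dT}{du}-\frac{r+1}{u}-\frac{1}{S}\frac{dS}{du}\right).$$
Since $g_r'(x)>0$ and $xS>0$, the claimed bound $g_r''(x)\leq-\frac{r+1}{uxS}\,|g_r'(x)|$ is, after dividing by $g_r'(x)$, multiplying by $xS$, and cancelling the term $\frac{r+1}{u}$, equivalent to $\frac{1}{T}\frac{dT}{du}\leq\frac{1}{S}\frac{dS}{du}$; clearing the positive denominator $TS$ and negating (both derivatives are negative), this is exactly
$$\left(\sum_{i=0}^{r-1}\frac{i}{(u-i)^2}\right)\left(\sum_{i=0}^{r-1}\frac{1}{u-i}\right)\geq\left(\sum_{i=0}^{r-1}\frac{1}{(u-i)^2}\right)\left(\sum_{i=0}^{r-1}\frac{i}{u-i}\right).$$

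It then remains to prove this inequality, which is the only substantive point. Put $a_i=\frac{1}{u-i}$ for $0\leq i\leq r-1$; since $u>r-1\geq i$, these are positive and strictly increasing in $i$. The inequality reads $\bigl(\sum_i i a_i^2\bigr)\bigl(\sum_i a_i\bigr)\geq\bigl(\sum_i a_i^2\bigr)\bigl(\sum_i i a_i\bigr)$; writing each product as a double sum over indices $j,k$ and symmetrizing in $j,k$, the difference of the two sides collapses to $\frac12\sum_{j,k}a_j a_k\,(j-k)(a_j-a_k)$, which is $\geq 0$ because $(j-k)$ and $(a_j-a_k)$ always have the same sign. This finishes the proof (equality does not occur for $r\geq 2$, but it is not needed here). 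I expect the only delicate part to be the bookkeeping in differentiating $g_r'(x)$ and verifying that the statement collapses precisely to $\frac{1}{T}\frac{dT}{du}\leq\frac{1}{S}\frac{dS}{du}$; the remaining inequality is a one-line symmetrization argument.
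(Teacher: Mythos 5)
Your proof is correct, and for the second-derivative bound it takes a genuinely different (and arguably cleaner) route than the paper. The first-derivative computation is the same: implicit differentiation of $x=p_r(u)$ via $\frac{dx}{du}=xS$ with $S=\sum_{i=0}^{r-1}\frac{1}{u-i}$, plus the identity $uS=r+T$ with $T=\sum_{i=0}^{r-1}\frac{i}{u-i}$. For $g_r''$, the paper differentiates the unsimplified expression $u^{-r}-ru^{-(r+1)}/S$ directly, rewrites the resulting bracket using $\sum_j\frac{1}{(u-j)^2}=\frac{1}{u}S+\frac{1}{u}\sum_j\frac{j}{(u-j)^2}$, and then applies the unweighted Chebyshev sum inequality $r\sum_j a_jb_j\geq\bigl(\sum_j a_j\bigr)\bigl(\sum_j b_j\bigr)$ with $a_j=\frac{1}{u-j}$, $b_j=\frac{j}{u-j}$ to absorb the leftover term and recover the factor $r+1$. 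You instead take the logarithmic derivative of the closed form $g_r'=\frac{T}{u^{r+1}S}$, so the term $-\frac{r+1}{u}$ appears for free and the stated bound becomes \emph{equivalent} to the single correlation inequality $\frac{T'(u)}{T}\leq\frac{S'(u)}{S}$, i.e.\ $\bigl(\sum_i ia_i^2\bigr)\bigl(\sum_i a_i\bigr)\geq\bigl(\sum_i a_i^2\bigr)\bigl(\sum_i ia_i\bigr)$, which you prove by the standard symmetrization $\frac12\sum_{j,k}a_ja_k(j-k)(a_j-a_k)\geq 0$ (this is a weighted Chebyshev inequality, different from the one the paper invokes). What your approach buys is transparency: the constant $\frac{r+1}{uxS}$ is seen to be exactly the log-derivative contribution of $u^{-(r+1)}$, and the whole lemma reduces to a one-line monotone-correlation fact, with the reduction reversible; the paper's computation is more mechanical but stays closer to the explicit formulas \eqref{eq:A''}--\eqref{eq:B''} it later quotes. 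Your side remarks (positivity of $g_r'$ for $r\geq 2$, $u>r-1$, so $|g_r'|=g_r'$; triviality of $r=1$) are accurate, and the restriction to $x>0$ is harmless for the paper's application, where the lemma is used on $[d_0,m)$.
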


\begin{proof}
  Since $x={u\choose r}$, we have
  \begin{equation} 
 \ln x=\sum_{j=0}^{r-1}\ln(u-j) -\ln (r!).   
  \end{equation}
  Taking derivative, we have
  \begin{equation}
    \label{eq:du}
\frac{dx}{du}= x \cdot \frac{d}{du}\left[\sum_{j=0}^{r-1}\ln(u-j) -\ln (r!)\right]
= x \sum_{j=0}^{r-1} \frac{1}{u-j}.    
  \end{equation}

Thus,
\begin{align*}
g_r'(x)&=  \frac{d}{dx} (xu^{-r})\\
       &= u^{-r} -ru^{-(r+1)}\frac{1}{\sum_{j=0}^{r-1} \frac{1}{u-j}}\\
       &=u^{-r}\left(1-r\frac{1}{u\sum_{j=0}^{r-1} \frac{1}{u-j}}\right)\\
  &=\frac{\sum_{i=0}^{r-1} \frac{i}{u-i}}{u^{r+1}\sum_{i=0}^{r-1} \frac{1}{u-i}}.
\end{align*}
Now we compute the second derivative. By the chain rule, we have
\begin{align*}
  g_r''(x)&= \frac{d}{du} (g_r'(x)) \cdot \frac{du}{dx}\\
         &= \left( -r u^{-r-1}+\frac{r(r+1)u^{-(r+2)}}{\sum_{j=0}^{r-1} \frac{1}{u-j}}
           -ru^{-(r+1)}\frac{\sum_{j=0}^{r-1} \frac{1}{(u-j)^2}}{\left(\sum_{j=0}^{r-1} \frac{1}{u-j}\right)^2} \right) \frac{du}{dx}\\
  &=-\frac{r}{u^{r+1}x \sum_{j=0}^{r-1} \frac{1}{u-j}} \left(1-\frac{r+1}{u\sum_{j=0}^{r-1} \frac{1}{u-j}}
           +\frac{\sum_{j=0}^{r-1} \frac{1}{(u-j)^2}}{\left(\sum_{j=0}^{r-1} \frac{1}{u-j}\right)^2} \right)\\
&=-\frac{r}{u^{r+1}x \sum_{j=0}^{r-1} \frac{1}{u-j}} \left(1- \frac{r}{u\sum_{j=0}^{r-1} \frac{1}{u-j}}
           +\frac{\sum_{j=0}^{r-1} \frac{j}{(u-j)^2}}{u\left(\sum_{j=0}^{r-1} \frac{1}{u-j}\right)^2} \right).
\end{align*}
Note that both sequences $a_j:=\frac{1}{u-j}$ and $b_j:=\frac{j}{u-j}$ are increasing sequence in $j$. Apply the Chebyshev's sum inequality: $r\sum_{j=0}^{r-1}a_jb_j\geq (\sum_{j=0}^{r-1}a_j)(\sum_{j=0}^{r-1}b_j)$.
We have
\begin{equation} \label{eq:Chebysev}
  r\sum_{j=0}^{r-1} \frac{j}{(u-j)^2}\geq
  \left(\sum_{j=0}^{r-1} \frac{1}{u-j}\right) \left( \sum_{j=0}^{r-1} \frac{j}{u-j}\right).
  \end{equation}
Applying Inequality \eqref{eq:Chebysev}, we get
  \begin{align*}
    -g_r''(x) 
    &\geq \frac{r}{u^{r+1}x \sum_{j=0}^{r-1} \frac{1}{u-j}} \left(1- \frac{r}{u\sum_{j=0}^{r-1} \frac{1}{u-j}}
      +\frac{\sum_{j=0}^{r-1} \frac{j}{u-j}}{ru\sum_{j=0}^{r-1} \frac{1}{u-j}} \right)\\
             &=\frac{r}{u^{r+1}x \sum_{j=0}^{r-1} \frac{1}{u-j}}
               \left(1- \frac{r}{u\sum_{j=0}^{r-1} \frac{1}{u-j}} +\frac{1}{r}-\frac{1}{u\sum_{j=0}^{r-1} \frac{1}{u-j}}
               \right)\\
    &=\frac{r+1}{u^{r+1}x \sum_{j=0}^{r-1} \frac{1}{u-j}}
               \left(1- \frac{r}{u\sum_{j=0}^{r-1} \frac{1}{u-j}}\right)\\
    &=\frac{r+1}{u x \sum_{j=0}^{r-1} \frac{1}{u-j}}|g_r'(x)|.
  \end{align*}
  \end{proof}

For a fixed $r$ and $m={s\choose r}$, let $d_0={s-1\choose r-1}$.
We define two functions $A,B\colon [d_0, m]\to \mathbb{R}$ as
\begin{align}
  A(x)&:=\frac{x}{(p_{r-1}^{-1}(x))^{r-1}},\\
  B(x)&:=\frac{m-x}{(p_{r}^{-1}(m-x))^{r}}.  
  \end{align}
  Write $x={t\choose r-1}$ and $m-x={u\choose r}$. We have
  \begin{align}
    \label{eq:A}
    A(x)& =xt^{-(r-1)},\\
    \label{eq:B}
    B(x)&=(m-x)u^{-r}.
  \end{align}
   Applying \autoref{l1}, we have
   \begin{align}
     \label{eq:A'}
     A'(x)&=t^{-(r-1)}\left(1-\frac{(r-1)}{t\sum_{i=0}^{r-2}\frac{1}{t-i}} \right)
            =\frac{\sum_{i=0}^{r-2}\frac{i}{t-i}}{t^r\sum_{i=0}^{r-2}\frac{1}{t-i}}
            ,\\
     \label{eq:B'}
     B'(x)&= -u^{-r}\left(1-\frac{r}{u\sum_{i=0}^{r-1}\frac{1}{u-i}}\right)
            =-\frac{\sum_{i=0}^{r-1}\frac{i}{u-i}}{u^{r+1}\sum_{i=0}^{r-1}\frac{1}{u-i}}
            ,\\
     \label{eq:A''}
     A''(x)&< - \frac{r}{t x\sum_{i=0}^{r-2}\frac{1}{t-i}} |A'(x)|,\\
     \label{eq:B''}
     B''(x)&<-\frac{r+1}{u(m-x) \sum_{i=0}^{r-1} \frac{1}{u-i}} |B'(x)|.
  \end{align}
  
We have the following lemma.

\begin{lemma} \label{l2} We have
  \begin{align}
     \label{eq:tsu} 
    t&\geq s-1\geq u,\\
     \label{eq:A-rB}
    A(x)- rB(x)&\geq A(x)\frac{r-1}{u},\\
    A(x)&\geq \frac{ru}{u-r+1} B(x).
    \label{eq:A/B}      
  \end{align}
   The equalities hold if and only if $x=d_0$.
\end{lemma}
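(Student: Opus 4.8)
The plan is to establish the three inequalities in the order they are stated, since each builds on the previous one. Throughout I write $x=\binom{t}{r-1}$ and $m-x=\binom{u}{r}$, so that $t=p_{r-1}^{-1}(x)$ and $u=p_r^{-1}(m-x)$, and recall that $m=\binom{s}{r}$ and $d_0=\binom{s-1}{r-1}$. The endpoint $x=d_0$ corresponds exactly to $t=s-1$ and $m-d_0=\binom{s}{r}-\binom{s-1}{r-1}=\binom{s-1}{r}$, hence $u=s-1$ as well. Since $x\geq d_0$, the function $p_{r-1}^{-1}$ is increasing so $t\geq s-1$; and $m-x\leq m-d_0$, so $u\leq s-1$. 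This proves \eqref{eq:tsu}.

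For \eqref{eq:A-rB}, I would rewrite it using \eqref{eq:A} and \eqref{eq:B} as $xt^{-(r-1)}-r(m-x)u^{-r}\geq \frac{r-1}{u}xt^{-(r-1)}$, i.e. $xt^{-(r-1)}\bigl(1-\frac{r-1}{u}\bigr)\geq r(m-x)u^{-r}$, which is precisely \eqref{eq:A/B} after multiplying by $\frac{u}{u-r+1}>0$ (note $u\geq r-1$, and strict inequality holds for $x>d_0$). So \eqref{eq:A-rB} and \eqref{eq:A/B} are equivalent, and it suffices to prove $A(x)(u-r+1)\geq ruB(x)$, i.e. $x u^r (u-r+1)\geq r(m-x)t^{r-1}$. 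The natural approach is to treat both sides as functions of $x$ on $[d_0,m]$ and compare them via their derivatives, using the first-derivative formulas \eqref{eq:A'}, \eqref{eq:B'} and the concavity bounds \eqref{eq:A''}, \eqref{eq:B''} from \autoref{l1}. At $x=d_0$ we have $t=u=s-1$, so both sides of $A(x)\geq \frac{ru}{u-r+1}B(x)$ reduce to checking $A(d_0)=\frac{r(s-1)}{s-r}B(d_0)$; using $A(d_0)=d_0(s-1)^{-(r-1)}=\binom{s-1}{r-1}(s-1)^{-(r-1)}$ and $B(d_0)=\binom{s-1}{r}(s-1)^{-r}$, together with $\binom{s-1}{r-1}=\frac{r}{s-r}\binom{s-1}{r}$, one sees the two sides agree exactly, giving the equality case.

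The remaining, and I expect main, task is to show the inequality is preserved as $x$ increases past $d_0$. I would form $F(x):=A(x)-\frac{ru}{u-r+1}B(x)$ (or better, the cleared-denominator version $G(x):=A(x)(u-r+1)-ruB(x)$, keeping in mind $u$ itself depends on $x$), show $F(d_0)=0$, and argue $F'(x)\geq 0$ on $(d_0,m]$. Differentiating $\frac{ru}{u-r+1}$ in $x$ introduces a $\frac{du}{dx}=-u^r/(\sum_{i=0}^{r-1}\frac{1}{u-i})\cdot$(sign) term via \eqref{eq:du}; combining this with $A'(x)\geq 0$ (clear from \eqref{eq:A'} since each $\frac{i}{t-i}\geq 0$) and $B'(x)\leq 0$ (clear from \eqref{eq:B'}) should make $F'(x)\geq 0$ transparent, because increasing $x$ raises $A$, lowers $B$, and the coefficient $\frac{ru}{u-r+1}$ — as a function of $u$ — is increasing in $u$ while $u$ is decreasing in $x$, so the whole negative term $-\frac{ru}{u-r+1}B(x)$ only increases. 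The one place requiring genuine care is the sign of $\frac{d}{du}\frac{ru}{u-r+1}=\frac{-r(r-1)}{(u-r+1)^2}<0$: the coefficient is actually \emph{decreasing} in $u$, but since $u$ decreases with $x$, the composite coefficient is increasing in $x$ — I would double-check this bookkeeping, since it is the crux. If a purely monotonicity argument for $F'$ proves delicate near $x=m$ (where $u\to r-1$ and $\frac{1}{u-r+1}\to\infty$), I would fall back on the second-order bounds \eqref{eq:A''}–\eqref{eq:B''}, using them to control the curvature of $F$ and confirm it cannot dip below zero, in the same spirit as the analytic arguments in \cite{maxspectra}.
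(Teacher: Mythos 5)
Your proof of \eqref{eq:tsu}, your observation that \eqref{eq:A-rB} and \eqref{eq:A/B} are equivalent by rearrangement, and your verification of the equality case at $x=d_0$ (where $t=u=s-1$) are all correct and agree with the paper. But the heart of the lemma is the inequality itself on $(d_0,m)$, and there your argument has a genuine gap at exactly the spot you flag as ``the crux.'' You want $F(x)=A(x)-\frac{ru}{u-r+1}B(x)$ to be nondecreasing, which requires the subtracted term $c(x)B(x)$, $c(x)=\frac{ru}{u-r+1}$, to be nonincreasing in $x$. Your own bookkeeping shows $c(x)$ is \emph{increasing} in $x$ (it decreases in $u$, and $u$ decreases in $x$), while $B(x)$ is decreasing and positive; the product of an increasing positive factor and a decreasing positive factor has no determined monotonicity, so the claim that ``the whole negative term $-\frac{ru}{u-r+1}B(x)$ only increases'' does not follow from the signs you computed --- if anything, the growth of $c(x)$ works against you. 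The proposed fallback via the curvature bounds \eqref{eq:A''}--\eqref{eq:B''} is not carried out, so the decisive step is unproved.

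The gap is easy to close, and doing so shows why no calculus is needed: the product simplifies. Using $m-x=\binom{u}{r}$ and \eqref{eq:B}, the factor $u-r+1$ cancels and
\begin{equation*}
\frac{ru}{u-r+1}\,B(x)=\binom{u}{r-1}u^{-(r-1)}=\frac{\prod_{i=0}^{r-2}\bigl(1-\tfrac{i}{u}\bigr)}{(r-1)!},
\qquad
A(x)=\binom{t}{r-1}t^{-(r-1)}=\frac{\prod_{i=0}^{r-2}\bigl(1-\tfrac{i}{t}\bigr)}{(r-1)!},
\end{equation*}
so \eqref{eq:A/B} is just the termwise comparison $1-\tfrac{i}{t}\geq 1-\tfrac{i}{u}$, valid since $t\geq u$ by \eqref{eq:tsu}, with equality iff $t=u$, i.e.\ $x=d_0$. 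This is essentially the paper's argument, phrased there as
$\frac{rB(x)}{A(x)}=\frac{\prod_{i=0}^{r-1}(1-i/u)}{\prod_{i=0}^{r-2}(1-i/t)}\leq 1-\frac{r-1}{u}$,
which gives \eqref{eq:A-rB} directly and \eqref{eq:A/B} by the rearrangement you already noted. In short: your endpoint analysis is right, but replace the monotonicity-of-$F$ plan (whose key step is unsupported) by this one-line product comparison.
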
 
\begin{proof}
    Since $x\geq d_0$, we have ${t\choose r-1}\geq {s-1\choose r-1}$. It implies
  $t\geq s-1$ with the equality holds if and only if $d=d_0$. We also have
  \begin{equation}
    \label{eq:ur}
  {u\choose r}=m-x\leq m-d_0={s\choose r}-  {s-1\choose r-1}={s-1\choose r}.  
  \end{equation}
  Thus $u\leq s-1$ with the equality holds if and only $x=d_0$.
 Since $t\geq u$, we have
  \begin{align*}
    \frac{A(x)-rB(x)}{A(x)}&= 1- \frac{r{u\choose r} u^{-r}}{{t\choose r-1} t^{-(r-1)}}\\
                      &= 1-\frac{\prod_{i=0}^{r-1}(1-\frac{i}{u})}{\prod_{i=0}^{r-2}(1-\frac{i}{t})}\\
                           &\geq 1 - (1-\frac{r-1}{u})\\
    &=\frac{r-1}{u}.
  \end{align*}
 The equality holds if and only if $t=u$ (or $x=d_0$).
 Inequality \eqref{eq:A/B} can be derived from \eqref{eq:A-rB} by solving $A(x)$.
\end{proof}

\begin{lemma} \label{l3}
  We have
  \begin{align}
    \label{eq:ut} 
    \frac{u\sum_{i=0}^{r-1}\frac{1}{u-i}}{r}&> \frac{t\sum_{i=0}^{r-2}\frac{1}{t-i}}{r-1},\\
    \label{eq:A'B'}
      -u^rB'(x)&>t^{r-1}A'(x).
    \end{align}
  \end{lemma}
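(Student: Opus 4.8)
The plan is to prove \eqref{eq:ut} first and then deduce \eqref{eq:A'B'} from it by a short monotonicity argument, so that the entire lemma reduces to one elementary estimate for sums of the form $\sum_i \frac{i}{v-i}$. The key preliminary observation is the identity $\frac{v}{v-i}=1+\frac{i}{v-i}$, which upon summing over $i$ and dividing by $k$ yields, for every $v>k-1$,
\[
\frac{v\sum_{i=0}^{k-1}\frac{1}{v-i}}{k}=1+\frac1k\sum_{i=1}^{k-1}\frac{i}{v-i}.
\]

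For \eqref{eq:ut}, I would apply this identity with $(v,k)=(u,r)$ and with $(v,k)=(t,r-1)$, which turns \eqref{eq:ut} into the equivalent inequality $\frac1r\sum_{i=1}^{r-1}\frac{i}{u-i}>\frac{1}{r-1}\sum_{i=1}^{r-2}\frac{i}{t-i}$. By \eqref{eq:tsu} we have $t\ge u$, and since all denominators $u-i,\,t-i$ with $0\le i\le r-2$ are positive, $\frac{i}{v-i}$ is decreasing in $v$; hence $\sum_{i=1}^{r-2}\frac{i}{t-i}\le\sum_{i=1}^{r-2}\frac{i}{u-i}$, and it suffices to prove the ``diagonal'' inequality $\frac1r\sum_{i=1}^{r-1}\frac{i}{u-i}>\frac{1}{r-1}\sum_{i=1}^{r-2}\frac{i}{u-i}$. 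Writing $P=\sum_{i=1}^{r-1}\frac{i}{u-i}$ and noting that the dropped term is $\frac{r-1}{u-(r-1)}$, a one-line rearrangement shows this is equivalent to $P<\frac{r(r-1)}{u-(r-1)}$; and since $x\mapsto\frac{x}{u-x}$ is increasing on $[0,u)$, each of the $r-1$ summands of $P$ is $\le\frac{r-1}{u-(r-1)}$, so $P\le\frac{(r-1)^2}{u-(r-1)}<\frac{r(r-1)}{u-(r-1)}$. This proves \eqref{eq:ut}, and the strict inequality (coming from $(r-1)^2<r(r-1)$) shows there is no equality case.

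For \eqref{eq:A'B'} I would put both sides in closed form via \eqref{eq:A'} and \eqref{eq:B'}, getting $-u^rB'(x)=\frac{\sum_{i=0}^{r-1}\frac{i}{u-i}}{u\sum_{i=0}^{r-1}\frac{1}{u-i}}$ and $t^{r-1}A'(x)=\frac{\sum_{i=0}^{r-2}\frac{i}{t-i}}{t\sum_{i=0}^{r-2}\frac{1}{t-i}}$. By the identity above the denominators are $r+P$ and $(r-1)+Q$ respectively, where $Q=\sum_{i=1}^{r-2}\frac{i}{t-i}$, so $-u^rB'(x)=\frac{P}{r+P}$ and $t^{r-1}A'(x)=\frac{Q}{(r-1)+Q}$. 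Since $z\mapsto\frac{z}{c+z}=1-\frac1{1+z/c}$ is strictly increasing in $z/c\ge0$, the inequality $\frac{P}{r+P}>\frac{Q}{(r-1)+Q}$ is equivalent to $\frac{P}{r}>\frac{Q}{r-1}$, which is precisely the reformulation of \eqref{eq:ut} derived above. Hence \eqref{eq:A'B'} follows at once.

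The only genuine content is the elementary bound $\sum_{i=1}^{r-1}\frac{i}{u-i}<\frac{r(r-1)}{u-(r-1)}$, which is immediate. I expect the main nuisance to be purely bookkeeping --- keeping the two ranges of summation ($0$ to $r-1$ versus $0$ to $r-2$) and the two normalizers ($r$ versus $r-1$) straight when applying the telescoping identity --- together with the harmless boundary point $x=m$, where $u=r-1$ makes the left side of \eqref{eq:ut} equal to $+\infty$ and both inequalities hold trivially (alternatively one restricts to $x<m$ and passes to the limit $x\uparrow m$).
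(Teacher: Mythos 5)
Your proof is correct and follows essentially the same route as the paper: you use $t\geq u$ to reduce the comparison to a single-variable inequality and then prove that inequality by bounding the one extra (largest) term against the rest, and you deduce \eqref{eq:A'B'} from \eqref{eq:ut} via the closed forms \eqref{eq:A'}--\eqref{eq:B'} exactly as the paper does. The identity $\frac{v}{v-i}=1+\frac{i}{v-i}$ and the $\frac{P}{r+P}$ reformulation are only cosmetic repackagings of the paper's argument (your bound $\sum_{i=1}^{r-2}\frac{i}{v-i}<\frac{(r-1)^2}{v-r+1}$ is equivalent to the paper's inequality \eqref{eq:t2}), so no substantive difference.
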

\begin{proof}
  Observe $\frac{u}{u-i}\geq \frac{t}{t-i}$ since $t\geq u$.
  It suffices to show
  \begin{equation}
    \label{eq:t1}
    \frac{t\sum_{i=0}^{r-1}\frac{1}{t-i}}{r} > \frac{t\sum_{i=0}^{r-2}\frac{1}{t-i}}{r-1}.
  \end{equation}
  Equivalently, 
  \begin{equation}
    \label{eq:t2}
 (r-1)\frac{1}{t-r+1}> \sum_{i=0}^{r-2}\frac{1}{t-i},   
  \end{equation}
which holds since $\frac{1}{t-r+1}>\frac{1}{t-i}$ for all $0\leq i<r-2$.

For Inequality \eqref{eq:A'B'}, we have
\begin{align*}
  -u^rB'(x)&=1-\frac{r}{u\sum_{i=0}^{r-1}\frac{1}{u-i}}\\
  &>1-\frac{(r-1)}{t\sum_{i=0}^{r-2}\frac{1}{t-i}}\\
  &=t^{r-1}A'(x).
\end{align*}
\end{proof}

The following lemma will play the key role in our proof of \autoref{t1}.
\begin{lemma}\label{key}
  For any integers $r\geq 2$, $m\geq 1$, and any real $x\in [d_0,m)$, we have
  \begin{equation}
    \label{eq:decreasing}
      (A(x)-rB(x))A'(x)+(r-1)A(x)B'(x)<0.  
  \end{equation}
\end{lemma}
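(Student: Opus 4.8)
The plan is to strip off the two ``first-order'' factors $A'$ and $B'$ and reduce \eqref{eq:decreasing} to a single pointwise inequality between $t$ and $u$ that the structure already recorded in \autoref{l2} and \autoref{l3} can then close. First note that on $[d_0,m)$ all the relevant quantities have fixed signs: $A(x)>0$, $A'(x)\ge 0$ by \eqref{eq:A'} (with $A'\equiv0$ exactly when $r=2$), $-B'(x)>0$ by \eqref{eq:B'} (since $u\sum_{i=0}^{r-1}\tfrac1{u-i}>r$), and $A(x)-rB(x)>0$ by \eqref{eq:A-rB}. Hence \eqref{eq:decreasing} is equivalent to $(A(x)-rB(x))A'(x)<(r-1)A(x)(-B'(x))$, and I would obtain this from the algebraic inequality
\begin{equation*}
  \bigl(A(x)-rB(x)\bigr)\,u^{\,r}\ \le\ (r-1)\,A(x)\,t^{\,r-1}\tag{$\ast$}
\end{equation*}
together with \eqref{eq:A'B'}.

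Indeed, multiplying $(\ast)$ by $u^{-r}A'(x)\ge 0$ gives $(A-rB)A'\le \tfrac{(r-1)A}{u^{r}}\bigl(t^{r-1}A'\bigr)$, and since $t^{r-1}A'(x)<-u^{r}B'(x)$ by \eqref{eq:A'B'} we conclude $(A-rB)A'<\tfrac{(r-1)A}{u^{r}}\bigl(-u^{r}B'\bigr)=(r-1)A(-B')$, which is \eqref{eq:decreasing}. The strict step holds even at $x=d_0$ (where $(\ast)$ is an equality), so \eqref{eq:decreasing} stays strict throughout; for $r=2$ one may instead just note $A'\equiv0$, so that \eqref{eq:decreasing} reads $(r-1)AB'<0$.

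It remains to prove $(\ast)$. Substituting $A(x)={t\choose r-1}t^{1-r}$ and $rB(x)=r{u\choose r}u^{-r}=(u-r+1){u\choose r-1}u^{-r}$, dividing by $P_t:=\prod_{i=0}^{r-2}(1-\tfrac it)>0$, and simplifying, $(\ast)$ becomes $u^{r}-(r-1)t^{r-1}\le (u-r+1)u^{r-1}Q$ with $Q:=\prod_{i=1}^{r-2}\tfrac{t(u-i)}{u(t-i)}\in(0,1]$; dividing by $u^{r-1}$ and using $u-(u-r+1)Q=(r-1)+(u-r+1)(1-Q)$, this is equivalent to
\begin{equation*}
  (u-r+1)(1-Q)\ \le\ (r-1)\bigl((t/u)^{r-1}-1\bigr).\tag{$\ast\ast$}
\end{equation*}
Now $1-\tfrac{t(u-i)}{u(t-i)}=\tfrac{i(t-u)}{u(t-i)}\in[0,1]$, so the union (Weierstrass) bound gives $1-Q\le \tfrac{t-u}{u}\sum_{i=1}^{r-2}\tfrac{i}{t-i}\le \tfrac{(t-u)(r-1)(r-2)}{2u(t-r+2)}$, while $(t/u)^{r-1}-1=(\tfrac tu-1)\sum_{j=0}^{r-2}(t/u)^{j}\ge (r-1)\tfrac{t-u}{u}$. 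Feeding both estimates into $(\ast\ast)$ (the case $t=u$, i.e.\ $x=d_0$, being an equality in $(\ast\ast)$) and cancelling the positive factor $\tfrac{(r-1)(t-u)}{u}$, it suffices to check $(u-r+1)(r-2)\le 2(r-1)(t-r+2)$, which holds since $u-r+1\le t-r+2$ (as $u\le t$ by \eqref{eq:tsu}) and $r-2\le 2(r-1)$.

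The main obstacle is $(\ast)$ itself — equivalently $(\ast\ast)$. The naive bounds $A-rB\le A$ and $(t/u)^{r-1}\ge1$ are too lossy: both fail already at $x=d_0$, where $(\ast)$ is tight, so the argument must genuinely exploit the coupling $m={t\choose r-1}+{u\choose r}$ linking $t$ and $u$, which is exactly what the reduction to $(\ast\ast)$ and the two elementary estimates accomplish. Apart from \eqref{eq:tsu}, the only inputs are the identity $rB/A=\prod_{i=0}^{r-1}(1-i/u)\big/\prod_{i=0}^{r-2}(1-i/t)$ used in the proof of \autoref{l2} and the strict inequality \eqref{eq:A'B'} of \autoref{l3}.
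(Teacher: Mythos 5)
Your proof is correct, and it takes a genuinely different route from the paper. The paper's proof sets $F(x):=(A(x)-rB(x))A'(x)+(r-1)A(x)B'(x)$ and argues by monotonicity: it verifies $F(d_0)<0$ (using \eqref{eq:A'B'} at $t=u=s-1$) and then shows $F'(x)<0$ on $[d_0,m)$, which forces it to control the second derivatives via \eqref{eq:A''} and \eqref{eq:B''}, i.e.\ to invoke the Chebyshev-sum machinery of \autoref{l1} together with \eqref{eq:A-rB}, \eqref{eq:A/B} and a chain of crude but carefully balanced estimates. You instead prove the inequality pointwise: you isolate the purely algebraic comparison $(A-rB)u^{r}\le (r-1)A\,t^{r-1}$, reduce it (after substituting $A={t\choose r-1}t^{1-r}$, $rB=(u-r+1){u\choose r-1}u^{-r}$) to the clean two-variable inequality $(u-r+1)(1-Q)\le (r-1)\bigl((t/u)^{r-1}-1\bigr)$, and close it with the Weierstrass product bound and the geometric-sum bound, the strictness of \eqref{eq:decreasing} coming entirely from the strict inequality \eqref{eq:A'B'} of \autoref{l3} (which also covers the degenerate case $A'\equiv 0$ at $r=2$ and the equality case $t=u$ at $x=d_0$). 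The net effect is that your argument needs only the first-derivative formulas \eqref{eq:A'}--\eqref{eq:B'}, \eqref{eq:tsu} and \eqref{eq:A'B'}, and dispenses with the second-derivative estimates of \autoref{l1} and with the two-claim monotonicity structure altogether; the paper's route proves slightly more (that $F$ itself is decreasing), but that extra information is never used elsewhere, so your shorter, more elementary reduction would serve the application in the proof of \autoref{t1} equally well.
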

\begin{proof}
  Let $F(x):=(A(x)-rB(x))A'(x)+(r-1)A(x)B'(x)$.
  We claim:
  \begin{description}
  \item[Claim a:] $F(d_0)<0$.
  \item[Claim b:] $F'(x)<0$ for all $x\in [d_0,m)$.
  \end{description}
These claims implies $F(d)\leq F(d_0)<0$.

\noindent {\bf Proof of Claim a:}
At $d=d_0$, we have $t=s-1=u$ by \autoref{l2}.
We have
\begin{align*}
  F(d_0)&=(A(d_0)-rB(d_0))A'(d_0)+(r-1)A(d_0)B'(d_0) \\
        &= \frac{r-1}{s-1}A(d_0)A'(d_0)+(r-1)A(d_0)B'(d_0) \hspace*{1cm}
          \mbox{by \autoref{l2}}\\
        &=  \frac{r-1}{(s-1)^r}A(d_0)\left((s-1)^{r-1}A(d_0)+ (s-1)^r B'(d_0) \right) \\
        &=  \frac{r-1}{(s-1)^r}A(d_0)\left(t^{r-1}A(d_0)+ u^r B'(d_0) \right) \hspace*{1cm}\mbox{ since $t=s-1=u$}\\        &<0 \hspace*{6.5cm} \mbox{ by \eqref{eq:A'B'}}.
\end{align*}
The proof of Claim a is finished.

\noindent
{\bf Proof of Claim b:}
We have
\begin{align} \nonumber
  F'(x)&= (A(x)-rB(x))A''(x)+ (A'(x)-rB'(x))A'(x)\\
 \nonumber  &\;\; + (r-1)A'(x)B'(x)+(r-1)A(x)B''(x)\\
       &=(A(x)-rB(x))A''(x) +(A'(x))^2 -A'(x)B'(x)+ (r-1)A(x)B''(x).
         \label{eq:F'x}
\end{align}
Note that $A(x)$, $B(x)$, and $A'(x)$ are positive and $B'(x)$, $A''(x)$, and $B''(x)$ are negative. By Inequality \eqref{eq:B''}, we have
\begin{equation} \label{eq:AB''1}
  -(r-1)A(x)B''(x)>A(x)\frac{(r-1)(r+1)}{u(m-x)\sum_{i=0}^{r-1}\frac{1}{u-i}} |B'(x)|.
\end{equation}
Combining Inequality \eqref{eq:AB''1}, \eqref{eq:A/B}, and Equation \eqref{eq:B},  we get
\begin{equation}\label{eq: 25}
  (r-1)A(x)|B''(x)| > |B'(x)| \frac{r^3-r}{u^r (u-r+1) \sum_{i=0}^{r-1}\frac{1}{u-i}}. 
 \end{equation} 
Thus, we have
\begin{align*}
  \frac{ (r-1)A(x) |B''(x)|}{A'(x)|B'(x)|}
  &> \frac{1}{A'(x)} \frac{r^3-r}{u^r (u-r+1) \sum_{i=0}^{r-1}\frac{1}{u-i}}\\
  &= \frac{t^r \sum_{i=0}^{r-2}\frac{1}{t-i}}{\sum_{i=0}^{r-2}\frac{i}{t-i}}\frac{(r^2-1)r}{u^r (u-r+1) \sum_{i=0}^{r-1}\frac{1}{u-i}}  \hspace*{1cm} \eqref{eq:A'}
  \\
  &>\frac{t^r(r^2-1)(r-1)}{u^r\sum_{i=0}^{r-2}\frac{i (u-r+1)}{t-i}}\hspace*{2cm} \mbox{ by \eqref{eq:ut}}\\
  &> \frac{t^r(r^2-1)(r-1)}{u^r\sum_{i=0}^{r-2}i}\hspace*{1cm}
    \mbox{since $u-r+1\leq t-i$} \\
  &=\frac{2(r^2-1)t^r}{(r-2) u^r}\\
  &>2(r+1)\frac{t^r}{u^r}.
\end{align*}
Hence,
\begin{align}
  \nonumber
  (r-1)A(x) |B''(x)|&>2(r+1)\frac{t^r}{u^r} A'(x) |B'(x)|\\
                    &>2r\frac{t^r}{u^r} A'(x) |B'(x)| +2 A'(x) |B'(x)|
                      \hspace*{1cm} \mbox{ since } t\geq u
                       \nonumber \\
                    &>\frac{2r t^{2r-1}}{u^{2r}} (A'(x))^2  +2 A'(x) |B'(x)|
                      \hspace*{1cm} \mbox{ by \eqref{eq:A'B'}} \nonumber\\
                      &>\frac{2r}{t} (A'(x))^2  +2 A'(x) |B'(x)|
                      \hspace*{5mm} \mbox{ since } t\geq u. \label{eq:AB''}
\end{align}
Now we estimate the lower bound of $(A(x)-rB(x))|A''(x)|$. 
Applying Inequalities \eqref{eq:A-rB} and \eqref{eq:A''}, we have
\begin{equation}
  \label{eq:28}
  (A(x)-rB(x))|A''(x)|>\frac{r-1}{u} A(x) \frac{r}{xt \sum_{i=0}^{r-2}\frac{1}{t-i}} A'(x).
\end{equation}
Combining with Equation \eqref{eq:A} and $t\geq u$, we get
\begin{equation}
  \label{eq:29}
  (A(x)-rB(x))|A''(x)|>\frac{r(r-1)}{t^{r+1} \sum_{i=0}^{r-2}\frac{1}{t-i}} A'(x).
\end{equation}
We have
\begin{align}
  \nonumber
  (A(x)-rB(x))|A''(x)| &>\frac{r(r-1)}{t^{r+1} \sum_{i=0}^{r-2}\frac{1}{t-i}} A'(x)\\ 
  \nonumber             &=(A'(x))^2 \frac{r(r-1)}{t\sum_{i=0}^{r-2}\frac{i}{t-i}} \hspace*{1cm}
                         \mbox{ by } \eqref{eq:A'}
           \\
  \nonumber           &>(A'(x))^2 \frac{r(r-1)}{t\sum_{i=0}^{r-2}\frac{i}{t-r+2}}
\hspace*{1cm}
                         \mbox{ since } t-i\geq t-r+2
  \\
  \label{eq:BA''}           &>(A'(x))^2 \frac{2(t-r+2)}{t}.
\end{align}
Combining Inequalities \eqref{eq:AB''} and \eqref{eq:BA''}, we get
\begin{align*}
  (A(x)-rB(x))|A''(x)| &+(r-1)A(x) |B''(x)|\\
  &>(A'(x))^2 \frac{2(t-r+2)}{t}
                                             + \frac{2r}{t} (A'(x))^2  +2 A'(x) |B'(x)|\\
                                           &=(A'(x))^2 \frac{2(t+2)}{t}  +2 A'(x) |B'(x)|\\
  &>2(A'(x))^2 +2 A'(x) |B'(x)|.                                            
\end{align*}
Recall that $A(x)$, $B(x)$, and $A'(x)$ are positive and $B'(x)$, $A''(x)$, and $B''(x)$ are negative. This implies
\begin{align*}
  F'(x)&=(A(x)-rB(x))A''(x) +(A'(x))^2 -A'(x)B'(x)+ (r-1)A(x)B''(x)\\
       &<-(A'(x))^2 - A'(x)|B'(x)|\\
       &<0.
\end{align*}
We finished the proof of Claim b.
\end{proof}

\section{Proof of  \autoref{t1} and \autoref{t2}}
For fixed $r\geq 2$ and $p\geq 1$,
let $H=(V, E)$ be an $r$-uniform hypergraph whose $p$-spectral radius attains the maximum among all the $r$-uniform hypergraphs with $m$ edges. We call $H$ a {\em $p$-maximum} hypergraph.
We have 

\begin{lemma}\label{degree}
  For any $r\geq 2$, $p\geq 1$, $m\geq 0$,  there always exists  a $p$-maximum hypergraph $H$
  (with $m$ edges) satisfying
  \begin{quotation}
    ``There exists a vertex $v$ with degree $d_v\geq d_0:= \frac{rm}{p_r^{-1}(m)}$ so that
    some Perron vector achieves the maximum at $v$.''
  \end{quotation}
 \end{lemma}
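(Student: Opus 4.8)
The plan is to induct on $m$. For $m=0$ the statement holds trivially (every vertex has degree $0=d_0$, and e.g.\ the first standard basis vector maximizes $P_H\equiv 0$ at it), so I would assume $m\ge 1$ and write $\mu^{*}(k)$ for the maximum of $\rho_p$ over all $r$-uniform hypergraphs with $k$ edges; note $\mu^{*}$ is non-decreasing since adjoining edges never decreases $\rho_p$, and recall $d_0=rm/p_r^{-1}(m)=\binom{s-1}{r-1}$ when $m=\binom{s}{r}$. I would start from any $p$-maximum $H$, delete its isolated vertices, and fix a Perron vector $\mathbf{x}$ with $x_1\ge\cdots\ge x_n\ge 0$ (relabelling vertices). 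The argument then splits on whether $\mathbf{x}$ is strictly positive.

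\textbf{Positive case.} I would replace $E(H)$ by the $m$ $r$-subsets of $V(H)$ that are largest in the total order $\prec$ which compares first by $\prod_{i\in e}x_i$ and breaks ties in favour of the edges meeting the lowest-indexed vertices. This does not decrease $P_H(\mathbf{x})$, so the new hypergraph (still called $H$) is again $p$-maximum with $\mathbf{x}$ a Perron vector, and $E(H)$ is an up-set for $\prec$. Put $\mathcal F=E(H)$, $\mathcal F_{\bar 1}=\{e\in\mathcal F:1\notin e\}$, $\mathcal F_1=\{f:\{1\}\cup f\in\mathcal F\}$, so $d_1=|\mathcal F_1|$ and $|\mathcal F_1|+|\mathcal F_{\bar 1}|=m$. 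For $e\in\mathcal F_{\bar 1}$ and $i\in e$, the set $e'=(e\setminus\{i\})\cup\{1\}$ has $\prod_{j\in e'}x_j=\frac{x_1}{x_i}\prod_{j\in e}x_j\ge\prod_{j\in e}x_j$ and $e'\succ e$ (strictly if $x_1>x_i$, by the tie-break if $x_1=x_i$, using $1\in e'$, $1\notin e$), so $e'\in\mathcal F$ and $e\setminus\{i\}\in\mathcal F_1$; hence $\partial\mathcal F_{\bar 1}\subseteq\mathcal F_1$, where $\partial$ is the shadow. If $\mathcal F_{\bar 1}=\emptyset$ then $d_1=m\ge d_0$ directly. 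Otherwise write $|\mathcal F_{\bar 1}|=\binom{y}{r}$ with $y\ge r$ and apply the Lov\'asz form of the Kruskal--Katona theorem: $\binom{y+1}{r}=\binom{y}{r}+\binom{y}{r-1}\le|\mathcal F_{\bar 1}|+|\partial\mathcal F_{\bar 1}|\le|\mathcal F_1|+|\mathcal F_{\bar 1}|=m=\binom{s}{r}$, so $y\le s-1$ and $|\mathcal F_{\bar 1}|\le\binom{s-1}{r}$; therefore $d_1\ge m-\binom{s-1}{r}=\binom{s-1}{r-1}=d_0$. Since $x_1=\max_i x_i$, the vertex $v=1$ works.

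\textbf{Zero case.} Now $x_n=0$ for a vertex of positive degree. Let $S=\{i:x_i>0\}$ and let $m'$ be the number of edges of $H$ inside $S$. Since edges meeting $V(H)\setminus S$ contribute $0$ to $P_H(\mathbf{x})$, one has $\rho_p(H)=P_H(\mathbf{x})=P_{H[S]}(\mathbf{x}|_S)\le\rho_p(H[S])\le\mu^{*}(m')\le\mu^{*}(m)=\rho_p(H)$, so $\rho_p(H[S])=\mu^{*}(m')=\mu^{*}(m)$; moreover $1\le m'<m$ (a vertex outside $S$ exists and would be isolated if $m'=m$, while $m'=0$ would force $\rho_p(H)=0$). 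I would apply the induction hypothesis to $m'$ to obtain a $p$-maximum $m'$-edge hypergraph $G$, a vertex $v$, and a Perron vector $\mathbf{y}$ of $G$ with $y_v=\max\mathbf{y}$ and $d_v(G)\ge rm'/p_r^{-1}(m')$. Because $\rho_p(G)=\mu^{*}(m')=\rho_p(H)$, adjoining $m-m'$ new edges to $G$, each made of $v$ together with $r-1$ brand-new vertices, yields an $m$-edge hypergraph $G^{+}$ with $\rho_p(G^{+})=\rho_p(G)$ (extend $\mathbf{y}$ by zeros: the new edges contribute $0$), whose Perron vector is still maximized at $v$, and with $d_v(G^{+})=d_v(G)+(m-m')$. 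Using Pascal's identity $p_r(x)=p_r(x-1)+p_{r-1}(x-1)$ and the monotonicity of $p_r$ on $[r-1,\infty)$, one checks $rm'/p_r^{-1}(m')+(m-m')\ge rm/p_r^{-1}(m)=d_0$ (it is equivalent to $p_r(p_r^{-1}(m)-1)\ge p_r(p_r^{-1}(m')-1)$, which holds since $m\ge m'$); hence $d_v(G^{+})\ge d_0$ and $v$ works.

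I expect the crux to be the positive case: securing the shadow inclusion $\partial\mathcal F_{\bar 1}\subseteq\mathcal F_1$ despite possible repeated entries of $\mathbf{x}$ — which is exactly what the bespoke tie-breaking in $\prec$ is designed to arrange — after which Kruskal--Katona is immediate. The zero-coordinate case should be comparatively routine once one commits to inducting on $m$ and passing to the positive support, the only real check there being the elementary inequality $rm'/p_r^{-1}(m')+(m-m')\ge d_0$.
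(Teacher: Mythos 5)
Your proof is correct, and its engine is the same as the paper's: arrange that the shadow of the edges avoiding the top vertex lies inside that vertex's link, then apply Lov\'asz's form of the Kruskal--Katona theorem; the closing arithmetic $\binom{y}{r-1}+\binom{y}{r}=\binom{y+1}{r}\le\binom{s}{r}$, giving $d_v\ge\binom{s-1}{r-1}=d_0$, is identical to the paper's. Where you differ is in how the shadow inclusion is produced and in your treatment of Perron vectors with zero entries. The paper relocates offending edges one at a time onto the vertex $v$ of largest coordinate (each move cannot decrease $P_H(\mathbf{x})$ because $x_v\ge x_u\ge0$, so $p$-maximality and the Perron vector are preserved, and the process terminates since $d_v$ strictly increases), whereas you compress in one shot to the $m$ largest $r$-sets under a weight order with a tie-break favouring vertex $1$; both work, and your up-set formulation is arguably tidier. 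Your case split, however, is unnecessary: your own product comparison needs no division, since $x_1\prod_{j\in e\setminus\{i\}}x_j\ge x_i\prod_{j\in e\setminus\{i\}}x_j$ holds for any nonnegative $\mathbf{x}$, and when the two products are equal (in particular both zero) the tie-break still gives $e'\succ e$; so the ``positive case'' argument already covers every Perron vector, and the entire induction on $m$ with the support restriction and the padded hypergraph $G^{+}$ can be deleted. That said, the zero case as written is also correct, including the inequality $rm'/p_r^{-1}(m')+(m-m')\ge d_0$, which via Pascal's identity reduces to monotonicity of $p_r$ and does require $m'\ge1$ so that $p_r^{-1}(m')-1\ge r-1$ --- a point you do secure. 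The paper sidesteps this whole issue for the same reason: its exchange argument never divides by a coordinate.
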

To prove this lemma, we will use  Lov\'asz's theorem on the shadow set.

\begin{definition}\label{shadef}
  Given a family $\mathcal{F}$ of $r$-sets, the shadow $\partial(\mathcal{F})$ is defined as
  \begin{equation}
    \label{eq:shadow}
\partial(\mathcal{F})=\{e': e'=e\setminus\{v\}, \text{for some $e\in\mathcal{F}$, and $v\in e$} \}.    
  \end{equation}
\end{definition}

Here is Lov\'asz's theorem on the shadow sets, which is slightly weaker but more convenient to use
than Kruskal-Katona's Theorem \cite{Katona, Kruskal}.
\begin{theorem}\label{shadow} 
(Lov\'asz \cite{Lovas}) Any $r$-uniform set family $\mathcal{F}$ of size $m =
{x\choose r}$ where $x$ is a real and $x \geq r$, must have
\begin{equation}
  \label{eq:lovasz}
|\partial(F)|\geq {x \choose r-1}.  
\end{equation}
\end{theorem}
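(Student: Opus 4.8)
The plan is to prove \autoref{shadow} by the compression (shifting) method, organized as a double induction on the uniformity $r$ and on the ground-set size $n$, with the whole combinatorial argument ultimately reducing to a single inequality between the real binomial coefficients $p_r(x)={x\choose r}$.

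First I would reduce to \emph{compressed} families. For $i<j$, let the shift operator $S_{ij}$ replace every $A\in\mathcal F$ with $j\in A$ and $i\notin A$ by $(A\setminus\{j\})\cup\{i\}$, unless that set already lies in $\mathcal F$. A routine check shows that $S_{ij}$ preserves cardinality, $|S_{ij}(\mathcal F)|=|\mathcal F|$, and never increases the shadow, $|\partial S_{ij}(\mathcal F)|\le|\partial\mathcal F|$. Since iterated shifting terminates, we may assume $\mathcal F$ is compressed, i.e.\ stable under every $S_{ij}$; as this can only have shrunk the shadow, proving \eqref{eq:lovasz} for compressed families suffices.

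Next I would set up the recursion on the top element $n$. Split $\mathcal F=\mathcal A_0\cup\mathcal A_1$, where $\mathcal A_0=\{A\in\mathcal F: n\notin A\}$ and $\mathcal A_1=\{A\setminus\{n\}: A\in\mathcal F,\ n\in A\}$ is $(r-1)$-uniform; both live on $[n-1]$. Sorting the shadow by whether it contains $n$ gives the exact identity $|\partial\mathcal F|=|\partial\mathcal A_0\cup\mathcal A_1|+|\partial\mathcal A_1|$. The compressed hypothesis yields the crucial containment $\mathcal A_1\subseteq\partial\mathcal A_0$ (outside the degenerate case $n=r$): for $B\in\mathcal A_1$ and any $i\in[n-1]\setminus B$, stability under $S_{in}$ forces $B\cup\{i\}\in\mathcal A_0$, whence $B\in\partial\mathcal A_0$. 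Thus the identity collapses to $|\partial\mathcal F|=|\partial\mathcal A_0|+|\partial\mathcal A_1|$. Writing $|\mathcal A_0|={a\choose r}$ and $|\mathcal A_1|={b\choose r-1}$, and applying the inductive hypothesis in $n$ to $\mathcal A_0$ and in $r$ to $\mathcal A_1$, I would obtain $|\partial\mathcal F|\ge{a\choose r-1}+{b\choose r-2}$, while $|\mathcal F|={a\choose r}+{b\choose r-1}={x\choose r}$.

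It then remains to deduce ${a\choose r-1}+{b\choose r-2}\ge{x\choose r-1}$ from the constraint ${a\choose r}+{b\choose r-1}={x\choose r}$, and this real-variable binomial inequality is where I expect the real work to lie. The strategy is to view the target as a function on the constraint surface and locate its minimum: on the boundary where $b$ is as large as the compressed structure allows (the regime $b=a$), Pascal's identity collapses both sides via ${a\choose r}+{a\choose r-1}={a+1\choose r}$ and ${a\choose r-1}+{a\choose r-2}={a+1\choose r-1}$, forcing $x=a+1$ and turning the claim into the equality ${a+1\choose r-1}={x\choose r-1}$; in the interior the inequality should be strict, controlled by the strict convexity and monotonicity of $p_r$ on $[r-1,\infty)$. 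The base cases $r=1$ (where $\partial\mathcal F=\{\emptyset\}$ and ${x\choose 0}=1$) and the full-level families are immediate. The principal difficulty, and the step on which I would spend the most care, is extracting from the compressed structure the precise constraint tying $b$ to $a$ and $x$ that makes this final inequality simultaneously true and tight in exactly the right place, since without such structural input the bare constraint surface does not force the inequality.
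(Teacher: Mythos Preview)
The paper does not prove \autoref{shadow}; it is quoted as a known result of Lov\'asz and used as a black box in the proof of \autoref{degree}. There is therefore no proof in the paper to compare against.

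Your outline is the standard compression proof and is correct through the decomposition $|\partial\mathcal F|=|\partial\mathcal A_0|+|\partial\mathcal A_1|$ and the inductive bounds $|\partial\mathcal A_0|\ge{a\choose r-1}$, $|\partial\mathcal A_1|\ge{b\choose r-2}$. You are also right that the remaining inequality ${a\choose r-1}+{b\choose r-2}\ge{x\choose r-1}$ does \emph{not} follow from the bare constraint ${a\choose r}+{b\choose r-1}={x\choose r}$; extra structural input is required, and this is the only genuine gap in your proposal. It is filled by a case split. If $a\ge x-1$, then ${b\choose r-1}={x\choose r}-{a\choose r}\le{x\choose r}-{x-1\choose r}={x-1\choose r-1}$, so $b\le x-1\le a$; under the hypothesis $b\le a$ the desired inequality is a short calculus/convexity check (this is Frankl's lemma). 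If instead $a<x-1$, then the same computation gives $b>x-1$, and now the containment $\mathcal A_1\subseteq\partial\mathcal A_0$ yields the cruder bound
\[
|\partial\mathcal F|\;\ge\;|\mathcal A_1|+|\partial\mathcal A_1|\;\ge\;{b\choose r-1}+{b\choose r-2}\;=\;{b+1\choose r-1}\;>\;{x\choose r-1},
\]
which already suffices. With this split supplied, your sketch becomes a complete proof of Lov\'asz's theorem.
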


Let $d_H(v)$ the degree of the vertex $v$ in $H$. 
Let $H-v$ be the induced subgraph obtained from $H$ by deleting the vertex $v$. 
Let $H_v$ be the link hypergraph of $v$; i.e., it contains all $(r-1)$-tuple $f$ such that
$f\cup \{v\}$ is an edge of $H$.
By definition of $H_v$, $d(v)$ is also the number of edges in $H_v$.

\begin{proof}[Proof of Lemma \ref{degree}:]
  Starting with any $p$-maximum hypegraph $H^0$ (of $m$ edges),
  let ${\bf x}$ be a Perron vector of $H^0$.
  Without loss of generality we can assume $x_1\geq x_2\geq \cdots \geq x_n\geq 0$.
  Let $v=v_1$ be the vertex corresponding to the first entry $x_1$.

If $\partial(H^0-v)$ is a subgraph of $H^0_v$, then we let $H=H^0$.
Otherwise, there exists an $(r-1)$-subset 
$\{v_{i_1},\ldots, v_{i_{r-1}}\}$ in $\partial(H^0-v)$ but not in $E(H^0_v)$.
By the definition of the shadow set $\partial(H^0-v)$, there is a vertex $u\not= v$
so that $\{u,v_{i_1},\ldots, v_{i_{r-1}}\}$ is an edge of $H^0-v$. By moving this edge
from $u$ to $v$, we obtain a new hypergraph $H^1$ from $H^0$, which still has $m$ edges.
Note that
\begin{equation}
  \label{eq:seq}
\rho_p(H^1)\geq P_{H^1}(\bm{x})\geq P_{H^0}(\bm{x})=\rho_p(H^0).  
\end{equation}
Since $H^0$ is a maximum hypergraph, we have $\rho_p(H^0)\geq \rho_p(H^1).$
This forces all inequalities in \eqref{eq:seq} to be equal.
In particular, $H^1$ is also a $p$-maximum hypergraph and $\bf x$ is 
a Perron vector for $H^1$.  Since the number of hypergraphs with $m$
edges is finite, we may continue this process until we reach a hypergraph $H$ satisfying
$\partial(H-v)\subseteq H_v$.

Write $m={s\choose r}$
and $|E(H-v)|={u \choose r}$ for some real numbers $s, u\geq r-1$.
By Theorem \ref{shadow}, we have
\begin{equation}
  \label{eq:shadow1}
|\partial(H-v)|\geq {u \choose r-1}.  
\end{equation}
We have 
$$|E(H_v)|\geq |\partial(H-v)|\geq
 {u \choose r-1}.$$ 
Thus,
\begin{align*}
{s\choose r}&=|E(H)| \\
&=|E(H_v)|+|E(H-v)|\\
&\geq  {u \choose r-1} +{u \choose r}\\
&={u+1\choose r}. 
\end{align*}
Thus, $s\geq u+1$. It implies 
\begin{align*}
|E(H_v)|&=e-|E(H-v)|\\
&={s\choose r}-{u \choose r} \\
&\geq {s\choose r}-{s-1\choose r}\\
        &={s-1\choose r-1}\\
&=\frac{rm}{s}\\
&=d_0.
\end{align*}
The proof is finished.
\end{proof}

\begin{proof}[Proof of \autoref{t1}]
  Note that $\mu(H)=\frac{1}{r}\rho_1(H)$. It is sufficient to prove
  for any $r$-uniform hypergraph $H$ with $m$ edges
  \begin{equation}
    \label{eq:rho1}
    \rho_1(H)\leq \frac{rm}{(p_r^{-1}(m))^r}.
  \end{equation}  
  We will use double inductions on $r$ and $m$ to prove Inequality \eqref{eq:rho1}.
  The assertion holds trivially for $r=1$ and any $m$.

Inductively, we assume the statement is true for all $(r-1)$-hypergraphs. For $r$-hypergraph, clearly, the statement is trivial for the cases $m=0, 1$. We only need to consider $m\geq 2$. We assume the statement holds for all
$r$-hypergraphs with less than $m$ edges. 
Let $H$ be the maximum hypergraph guaranteed by Lemma \ref{degree} (for $p=1$).
Then $H$ has a Perron vector ${\bf x}$ and a vertex $v$ so that
$\bf x$ reaches the maximum at $v$ and $d_v\geq d_0$.
Without loss of generality, we assume $v=v_1$.
Write $m={s\choose r}$, $d_v={t\choose r-1}$, and $m-d_v={u\choose r}$.
Then $d_0={s-1\choose r-1}$.

Recall that $H-v$ is the induced hypergraph obtained from $H$ by deleting the vertex $v$ and
$H_v$ is the link graph of $H$ at $v$. Let
${\bf y}=(y_2,\ldots, y_n)\in \mathbb{R}^{n-1}$ so that $y_j=x_j/(1-x_1)$ for all $j\geq 2$.
On one hand, from Eigen-equation \eqref{eq:eigen}, we have
\begin{align*}
  \rho_1(H)&=\sum_{\{i_2,\ldots, i_r\}\in E(H_v)}x_{i_2}\cdots x_{i_r}\\
  &=(1-x_1)^{r-1} \sum_{\{i_2\ldots, i_r\}\in E(H_v)}y_{i_2}\cdots y_{i_r}\\
  &\leq \frac{1}{r-1} (1-x_1)^{r-1}\rho_1(H_v).  
\end{align*}
By inductive hypothesis, we have $\rho_1(H_v)\leq (r-1) d_v \left( p_{r-1}^{-1}(d_v) \right)^{-(r-1)}$.
Thus,
\begin{equation}
  \label{eq:G1}
  \rho_1(H)\leq (1-x_1)^{r-1} d_v \cdot \left( p_{r-1}^{-1}(d_v) \right)^{-(r-1)}.
\end{equation}

On the other hand, we have
\begin{align}\nonumber
  (1-rx_1) \rho_1(H) &=r \sum_{\{i_1,i_2,\ldots, i_r\}\in E(H)} x_{i_1}x_{i_2}\cdots x_{i_r}
                       - r x_1 \sum_{\{i_2\,\ldots, i_r\} \in E(G_1)} x_{i_2}\cdots x_{i_r} \\
  \label{eq:38}
                     &= r \sum_{\{i_1,i_2,\cdots, i_r\} \in E(H-v)} x_{i_1}x_{i_2}\cdots x_{i_r}\\
  \nonumber
   &= (1-x_1)^r \cdot r\sum_{\{i_1,i_2,\ldots, i_r\}\in E(H-v)} y_{i_1}y_{i_2}\cdots y_{i_r}\\
                     &\leq (1-x_1)^r \rho_1(H-v).
\end{align}
First by Equation \eqref{eq:38}, we have $(1-rx_1) \rho_1(H)\geq 0$. Thus $x_1\leq \frac{1}{r}$.
Second by inductive hypothesis, we have $\rho_1(H-v)\leq r (m-d_v) \cdot \left( p_{r}^{-1}(m-d_v) \right)^{-r}$.
Thus,
\begin{equation}
  \label{eq:H1}
  \rho_1(H)\leq \frac{(1-x_1)^{r}}{1-rx_1} r (m-d_v) \left( p_{r}^{-1}(m-d_v) \right)^{-r}.
\end{equation}

Let $A(x):=x \cdot \left( p_{r-1}^{-1}(x) \right)^{-(r-1)}$ and $B(x):=(m-x) \left( p_{r}^{-1}(m-x) \right)^{-r}$. Then
$$\rho_1(H)\leq \min_{x_1\in [0,\frac{1}{r}]} \left\{ A(d_v) (1-x_1)^{r-1}, B(d_v) r \frac{(1-x_1)^{r}}{1-rx_1} \right\}.$$
As the function of $x_1$, the first function decreases and the second
function increases. Two functions intersect at the point
$x_0=(A(d_v)-rB(d_v))/(rA(d_v)-rB(d_v))$. We have
\begin{equation}
  \label{eq:h}
  \rho_1(H)\leq  A(d_v) (1-x_0)^{r-1}= \frac{(r-1)^{r-1} A(d_v)^r}{r^{r-1} (A(d_v)-B(d_v))^{r-1}}
  :=h(d_v).
\end{equation}
Here $h(x):=\frac{(r-1)^{r-1} A(x)^r}{r^{r-1} (A(x)-B(x))^{r-1}}.$
Then
\begin{equation}
  \label{eq:lnh}
\ln (h(x))=(r-1)(\ln(r-1)-\ln r)+ r\ln (A(x)) -(r-1) \ln (A(x)-B(x)).
\end{equation}
Taking derivative, we get
\begin{align*}
  \frac{h'(x)}{h(x)}&=\frac{rA'(x)}{A(x)}-(r-1)\frac{A'(x)-B'(x)}{A(x)-B(x)}\\
  &=\frac{(A(x)-rB(x))A'(x)+(r-1)A(x)B'(x))}{A(x)(A(x)-B(x))}.
\end{align*}
By \autoref{l2} and \autoref{key}, we have $h'(x)<0$ for all $x\in [d_0,m)$.
Thus $h(x)$ is a decreasing function. We have
\begin{equation}
  \label{eq:hdec}
h(d_v)\leq h(d_0).  
\end{equation}
Note that
\begin{align*}
  A(d_0)&=d_0(p_{r-1}^{-1}(d_0))^{-(r-1)}\\
        &={s-1\choose r-1}  (s-1)^{-(r-1)}\\
  &=\frac{\prod_{i=0}^{r-2}(1-\frac{i}{s-1})}{(r-1)!}.          
\end{align*}
\begin{align*}
  B(d_0)&=(m-x)(p_{r}^{-1}(m-d_0))^{-r}\\
        &={s-1\choose r}  (s-1)^{-r}\\
  &=\frac{\prod_{i=0}^{r-1}(1-\frac{i}{s-1})}{r!}. 
\end{align*}
Thus,
\begin{align*}
  A(d_0)-B(d_0) &=A(d_0)\left(1-\frac{1}{r}\left(1-\frac{r-1}{s-1}\right)\right)\\
  &=A(d_0)\frac{(r-1)s}{r(s-1)}.
\end{align*}
Plugging into $h(d_0)$, we get
\begin{align*}
  h(d_0) &=\frac{(r-1)^{r-1} A(d_0)^r}{r^{r-1} (A(d_0)-B(d_0))^{r-1}}\\
            &=\frac{(r-1)^{r-1}}{r^{r-1}} A(d_0) \left(\frac{A(d_0)}{A(d_0)-B(d_0)} \right)^{r-1}\\
            &=\frac{(r-1)^{r-1}}{r^{r-1}} \frac{\prod_{i=0}^{r-2}(1-\frac{i}{s-1})}{(r-1)!}
              \left(\frac{r(s-1)}{(r-1)s}  \right)^{r-1}\\
            &=\frac{\prod_{i=0}^{r-2}(1-\frac{1+i}{s})}{(r-1)!}\\
            &=\frac{r{s\choose r}}{s^r}.\\
            &=\frac{rm}{(p_r^{-1}(m))^r}.
\end{align*}

Combining it with Inqualities \eqref{eq:h} and \eqref{eq:hdec}, we get
\begin{equation}
  \rho_1(H)\leq h(d_v)\leq h(d_0)\leq \frac{rm}{(p_r^{-1}(m))^r}.
\end{equation}
The inductive proof of Inequality \eqref{eq:rho1} is finished.

When the inequality holds, we must have $m={s\choose r}$, $|E(H_v)|=d_v={s-1\choose r-1}$, $\rho_1(H_v)={s-1 \choose r-1} (s-1)^{-(r-1)}$,
  $|E(H-v)|={s-1\choose r}$, $\rho_1(H-v)={s-1 \choose r} (s-1)^{-r}$, and $\partial(H-v)\subseteq H_v$.
By inductive hypothesis, $H_v$ is the
complete graph $K_{s-1}^{r-1}$ and $H-v$ is the complete graph $K_{s-1}^r$.
Together with $\partial(H-v)\subseteq H_v$, we conclude that $H$ is the complete graph $K_s^r$
and finished the inductive proof.

Since adding isolated vertices will not change the number of edges and the spectral radius,
 the inequality in \autoref{t1} holds if and only if $H$ is 
the complete hypergraph possibly with some isolated vertices added. 
\end{proof}

The following Lemma is due to Nikiforov \cite{Nikiforov2014}. Here we relay his proof for the completeness.
\begin{lemma}\cite{Nikiforov2014} \label{l4}
  For any $p\geq 1$ and any $r$-uniform hypergraph $H$ with $m$ edges, we have
  \begin{equation}
    \label{eq:1top}
    \rho_p(H)\leq \rho_1(H)^{1/p} (rm)^{1-1/p}.
  \end{equation}
\end{lemma}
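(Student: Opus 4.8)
The plan is to reduce the general $p\geq 1$ estimate to the case $p=1$ already proved in \autoref{t1}, using a convexity/power-mean argument applied to the Perron vector of $\rho_p(H)$. First I would take a Perron vector $\mathbf{x}\in S_p^+$ for $\rho_p(H)$, so that $\|\mathbf{x}\|_p=1$ and $P_H(\mathbf{x})=\rho_p(H)$. The vector $\mathbf{x}$ is nonnegative but it is not in $S_1^+$; the idea is to pass to the normalized vector $\mathbf{y}=\mathbf{x}/\|\mathbf{x}\|_1$, which does lie in $S_1^+$, and to control the two quantities $\|\mathbf{x}\|_1$ and $P_H(\mathbf{y})$ separately.

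The key step is a two-part estimate. On the one hand, since each monomial $x_{i_1}\cdots x_{i_r}$ in $P_H(\mathbf{x})$ is a product of $r$ nonnegative numbers each at most $\max_i x_i\leq \|\mathbf{x}\|_p\le 1$, and there are $m$ edges, we get a crude bound of the form $P_H(\mathbf{x})\leq rm\cdot(\text{something})$; more precisely I would write $P_H(\mathbf{x})=r\sum_{e\in E}\prod_{i\in e}x_i$ and use that $\prod_{i\in e}x_i \le x_{j}^{?}$ is not quite what is wanted — instead the cleanest route is: factor $P_H(\mathbf{x})= \|\mathbf{x}\|_1^{r}\,P_H(\mathbf{y})$ by homogeneity of degree $r$, and separately bound $\|\mathbf{x}\|_1$ in terms of $\|\mathbf{x}\|_p$ and the "effective support". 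Actually the slick argument Nikiforov uses, and the one I would follow, is: by homogeneity $\rho_p(H)=P_H(\mathbf{x})=\|\mathbf{x}\|_1^{r}P_H(\mathbf{y})\le \|\mathbf{x}\|_1^{r}\,\rho_1(H)$ since $\mathbf{y}\in S_1^+$; then it remains to bound $\|\mathbf{x}\|_1^{r}$. For this use that $P_H(\mathbf{x})\le rm\prod$-type crude bound is replaced by the observation $\rho_p(H)=P_H(\mathbf{x})=r\sum_{e}\prod_{i\in e}x_i$, and apply the AM–GM / rearrangement so that $\sum_e\prod_{i\in e}x_i\le$ (a symmetric-function bound). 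Concretely, I would invoke the inequality $\sum_{e\in E}\prod_{i\in e}x_i \le \binom{}{}$ — the clean form is: for nonnegative $\mathbf{x}$ with $\|\mathbf{x}\|_p=1$, one has $P_H(\mathbf{x})\le rm\cdot\|\mathbf{x}\|_{?}$, but the truly clean statement, and the one that makes the exponents work out to $(rm)^{1-1/p}$, is
\begin{equation*}
\rho_p(H)=P_H(\mathbf{x})=\|\mathbf{x}\|_1^{r}P_H(\mathbf{y})\le \rho_1(H)\|\mathbf{x}\|_1^{r},
\end{equation*}
combined with $\rho_p(H)=P_H(\mathbf{x})\le rm\Big(\sum_i x_i\Big)^{?}$ — I will determine the right auxiliary bound in the writeup, but the mechanism is that $\|\mathbf{x}\|_1^{r}\le (rm/\rho_p(H))^{r-1}$-type relation lets one solve for $\rho_p(H)$.

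Let me state the approach more decisively. Writing $\rho=\rho_p(H)$ and letting $\mathbf{x}$ be its Perron vector: by homogeneity $\rho=\|\mathbf{x}\|_1^r P_H(\mathbf{x}/\|\mathbf{x}\|_1)\le \|\mathbf{x}\|_1^r\,\rho_1(H)$. Separately, from the crude bound $\rho = r\sum_{e}\prod_{i\in e}x_i$ and AM–GM $\prod_{i\in e}x_i\le \tfrac1r\sum_{i\in e}x_i^{?}$ applied with the right exponent, or more directly from the fact that each $x_i\le 1$ so $\prod_{i\in e}x_i\le \big(\sum_{i\in e}x_i\big)/r$ is too weak, I instead bound $\|\mathbf{x}\|_1$ above using $\rho$: from the eigen-equations \eqref{eq:eigen}, $\sum_i x_i\cdot x_i^{p-1}\rho = \sum_i x_i\sum_{e\in E(H_i)}\prod_{j\in e}x_j$, and $\sum_i \rho x_i^{p} = \rho = P_H(\mathbf{x})$, which just recovers $\|\mathbf{x}\|_p=1$. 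The genuinely useful inequality is: $\|\mathbf{x}\|_1^{r}\,\rho_1(H)\ge \rho$ gives $\|\mathbf{x}\|_1 \ge (\rho/\rho_1(H))^{1/r}$ — wrong direction. So instead I bound $\|\mathbf{x}\|_1$ \emph{above}: since $\rho= r\sum_e\prod_{i\in e}x_i\le r\sum_e \max_i x_i \cdot \prod_{i\in e, i\neq \text{argmax}} x_i$, iterate to get $\rho \le rm (\max_i x_i)^{r} \le rm$; hence $\max_i x_i \ge (\rho/(rm))^{1/r}$... The hard part, and the crux of the writeup, is getting the exponent bookkeeping exactly right so that the two estimates $\rho\le \rho_1(H)\|\mathbf{x}\|_1^r$ and an upper bound $\|\mathbf{x}\|_1^{r}\le (rm)^{1-1/p}\rho^{1-1/p}/\rho_1(H)^{1-1/p}$-type relation combine to \eqref{eq:1top}. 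I expect the main obstacle to be precisely this: producing the correct upper bound on $\|\mathbf{x}\|_1^{r}$ (equivalently, the right application of Hölder's inequality relating $\|\mathbf{x}\|_1$, $\|\mathbf{x}\|_p$, and the number of edges $m$), after which solving the resulting inequality $\rho^{p}\le \rho_1(H)\cdot (rm)^{p-1}$ for $\rho$ is routine and yields exactly $\rho_p(H)\le \rho_1(H)^{1/p}(rm)^{1-1/p}$.

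Once \autoref{l4} is in hand, \autoref{t2} for general $p$ follows by plugging the bound $\rho_1(H)\le rm/(p_r^{-1}(m))^{r} = rm\,s^{-r}$ from \autoref{t1} (recall $m=\binom{s}{r}$, so $s=p_r^{-1}(m)$) into \eqref{eq:1top}:
\begin{equation*}
\rho_p(H)\le \big(rm\,s^{-r}\big)^{1/p}(rm)^{1-1/p}=rm\,s^{-r/p},
\end{equation*}
which is exactly \eqref{eq:rhop}. For the equality characterization, equality in \eqref{eq:1top} together with equality in \autoref{t1} forces $s$ to be an integer and $H=K_s^r$ plus isolated vertices; conversely for $K_s^r$ the uniform Perron vector gives equality throughout, so the characterization transfers. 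I would note the monotonicity remark (that $(\rho_p(H)/(rm))^p$ is non-increasing in $p$, per Nikiforov) as the conceptual reason this reduction works, but the self-contained proof via \autoref{l4} is what I would write out in full.
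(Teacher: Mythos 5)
There is a genuine gap: your write-up never produces the one estimate that actually proves \eqref{eq:1top}, and the reduction you do commit to cannot be completed. The paper's proof is two lines. With $\mathbf{x}$ the Perron vector for $\rho_p(H)$, apply H\"older (the power-mean inequality) directly to the $m$-term edge sum,
$$r\sum_{e\in E(H)}\prod_{i\in e}x_i\;\le\;(rm)^{1-1/p}\Bigl(r\sum_{e\in E(H)}\prod_{i\in e}x_i^p\Bigr)^{1/p},$$
and then observe that the vector $\mathbf{y}=(x_1^p,\ldots,x_n^p)$ satisfies $\|\mathbf{y}\|_1=\|\mathbf{x}\|_p^p=1$, so $r\sum_{e}\prod_{i\in e}x_i^p=P_H(\mathbf{y})\le\rho_1(H)$. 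The substitution $y_i=x_i^p$ (not $y_i=x_i/\|\mathbf{x}\|_1$) together with H\"older over the $m$ edges is the whole idea; it is precisely the ``right application of H\"older'' that you repeatedly defer (``I will determine the right auxiliary bound in the writeup'', ``the hard part \ldots is getting the exponent bookkeeping exactly right''), so the crux of the lemma is missing from the proposal.

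Moreover, the route you do commit to --- $\rho_p(H)=\|\mathbf{x}\|_1^{r}P_H(\mathbf{x}/\|\mathbf{x}\|_1)\le\rho_1(H)\|\mathbf{x}\|_1^{r}$ followed by some upper bound on $\|\mathbf{x}\|_1$ --- cannot be repaired, because the intermediate quantity $\rho_1(H)\|\mathbf{x}\|_1^{r}$ can already exceed the target $\rho_1(H)^{1/p}(rm)^{1-1/p}$. Take $H$ to be a matching of $m\ge 2$ disjoint edges (no isolated vertices) and $p>r$: by uniqueness and symmetry the Perron vector is uniform, $x_i=(rm)^{-1/p}$ on the $rm$ vertices, so $\|\mathbf{x}\|_1^{r}=(rm)^{r(1-1/p)}$, while $\rho_1(H)=r^{1-r}$; then $\rho_1(H)\|\mathbf{x}\|_1^{r}$ exceeds $\rho_1(H)^{1/p}(rm)^{1-1/p}$ by the factor $m^{(r-1)(1-1/p)}>1$. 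Hence no bound on $\|\mathbf{x}\|_1$ (H\"older over the support or otherwise) can close your chain of inequalities; the first step already loses too much, which is why your exponent bookkeeping would not come out. The final portion of your proposal, deducing \autoref{t2} from \eqref{eq:1top} and \autoref{t1}, is fine, but it presupposes the lemma you have not proved; also note the paper's \autoref{t2} derivation does not separately argue the equality case through \eqref{eq:1top}.
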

\begin{proof}
  Let $\bm{x}$ be the Perron vector for $\rho_p(H)$. Then we have
  \begin{align*}
    \rho_p(H)&=r \sum_{\{i_1,\ldots, i_r\}\in E(H)} x_{i_1}\ldots x_{i_r}\\
             & \leq (rm)^{1-1/p}(r\sum_{\{i_1,\ldots, i_r\}\in E(H)} x_{i_1}^p\ldots x_{i_r}^p)^{1/p}\\
             &\leq  (rm)^{1-1/p} (\rho_1(H))^{1/p}.
  \end{align*}
\end{proof}

\begin{proof}[Proof of \autoref{t2}:]
  By  \autoref{t1}, we get
  \begin{equation}
    \label{eq:rho1H}
\rho_1(H)\leq \frac{rm}{s^{r}}.    
  \end{equation}
 Combining Inequalities \eqref{eq:1top} and \eqref{eq:rho1H},
 we get
 $$\rho_p(H)\leq \frac{rm}{s^{r/p}}.$$
\end{proof}

\end{document}